\definecolor{ddorange}{rgb}{1,0.5,0}
\definecolor{ddcyan}{rgb}{0,0.2,1.0}
\newcommand{\R}{{\mathbb R}}
\newcommand{\C}{{\mathbb C}}
\newcommand{\N}{{\mathbb N}}
\newcommand{\Z}{{\mathbb Z}}
\newcommand{\km}{{k^{-1}}}
\newcommand{\Rn}{{\R}^n}
\newcommand{\xy}{^\xi_y}
\newcommand{\Dxy}{{(\Omega\setminus A)^\xi_y}}
\newcommand{\Oxy}{{\Omega^\xi_y}}
\newcommand{\Axy}{{A^\xi_y}}
\newcommand{\dx}{\, \mathrm{d} x}
\renewcommand{\dh}{\, \mathrm{d} \mathcal{H}^{n-1}}
\newcommand{\hn}{\mathcal{H}^{n-1}}
\newcommand{\ho}{\mathcal{H}^0}
\newcommand{\Sn}{{\mathbb{S}^{n-1}}}
\newcommand{\ol}{\overline}
\newcommand{\sm}{\setminus}
\newcommand{\Mnn}{{\mathbb{M}^{n\times n}_{sym}}}
\newcommand{\dod}{{\partial_D \Omega}}
\newcommand{\don}{{\partial_N \Omega}}
\newcommand{\dom}{{\partial \Omega}}
\newcommand{\weak}{\rightharpoonup}
\newcommand{\wstar}{\stackrel{*}\rightharpoonup}
\DeclareMathOperator*{\aplim}{ap\,lim}
\theoremstyle{plain}
\theoremstyle{plain}
\newtheorem{theorem}{Theorem}[section]
\newtheorem{proposition}[theorem]{Proposition}
\newtheorem{lemma}[theorem]{Lemma}
\theoremstyle{definition}
\newtheorem{definition}[theorem]{Definition}
\theoremstyle{remark}
\newtheorem{remark}[theorem]{Remark}
\numberwithin{equation}{section}
\title[Compactness and lower semicontinuity 
in $GSBD$]{
Compactness and lower semicontinuity 
in $GSBD$}
\author{Antonin Chambolle \and Vito Crismale}
\address{CMAP, \'Ecole Polytechnique, CNRS, 91128 Palaiseau Cedex, France}
\email[Antonin Chambolle]{antonin.chambolle@cmap.polytechnique.fr}
\email[Vito Crismale]{vito.crismale@polytechnique.edu}
\begin{document}
\begin{abstract}
In this paper we prove a compactness and semicontinuity
result in $GSBD$ for sequences with bounded Griffith energy.
This generalises classical results in $(G)SBV$ by Ambrosio~\cite{Amb89UMI, Amb90GSBV, AmbNew95} and $SBD$~by Bellettini-Coscia-Dal Maso~\cite{BelCosDM98}.
As a result, the static problem in Francfort-Marigo's variational
approach to crack growth~\cite{FraMar98} admits (weak) solutions.
\smallskip

\noindent
\end{abstract}
\keywords{ Generalised special functions of bounded
deformation, brittle fracture, compactness.}
\subjclass[2010]{ 49Q20, 49J45, 26A45, 74R10, 74G65, 70G75.}

\maketitle
\tableofcontents

\section{Introduction}

The variational approach to fracture was introduced by Francfort and
Marigo in~\cite{FraMar98} in order to build crack evolutions in brittle
materials, following Griffith's laws~\cite{Griffith}, without 
\textit{a priori} knowledge of the crack path (or surface in higher
dimension). It relies on successive minimisations of the
\emph{Griffith energy}:
\[
(u,  K)  \mapsto \int\limits_{\Omega\setminus K} \C e(u):e(u)dx + \gamma\,\hn(K)
\]
where $\Omega\subset\R^n$ is a bounded open set, the \textit{reference
configuration}, $u:\Omega\to \R^n$ is an (infinitesimal) \emph{displacement},
$e(u)$ its symmetrised gradient
(the \textit{infinitesimal elastic strain})
 and $\C$ the \emph{Cauchy stress tensor} defining the 
\textit{Hooke's law} (in particular, $\C a:a$ defines a positive
definite quadratic form of the $n\times n$ symmetric tensor $a$).
The symmetrised gradient  $e(u)$ is defined
out of the \emph{crack set} $K$, which is in the theory a compact $(n{-}1)$-dimensional
set and is penalised by its surface (multiplied by a coefficient
$\gamma$ called the \textit{toughness}).

The minimisation of the energy is under the constraint that
$K$ should contain a previously computed crack $K_0$, and that $u$ should
satisfy a Dirichlet condition $u=u_0$ on a subset $\dod\setminus K$
of $\partial \Omega$, where $\dod$ is a regular part of the boundary
and $u_0$ a sufficiently regular displacement. Hence an important
question in the theory is whether the problem
\begin{equation}\label{eq:FM}
\min_{\substack{u=u_0\,\textup{on}\, \dod \setminus K\\ K_0 \subset K \text{compact}}} \hspace{1em}
\int\limits_{\Omega\setminus K} \C e(u):e(u)dx + \gamma\,\hn(K)
\end{equation}
has a solution.

This problem however is not easy to analyse,
since the energy controls very little of the function $u$: 
for instance
if $K$ almost cuts out from $\dod$ a connected component of $\Omega$, the function $u$
may have any (arbitrarily large) value in this component at 
small cost. 
 
From a technical point of view, one cannot take truncations or compositions with bounded transformations to get an \emph{a priori} $L^\infty$ bound for minimisers.
%
In fact, the integrability of $e(u)$ is in general lost by $e(\psi(u))$, unless
$\psi(y)=y_0 + \lambda y$, for some $y_0 \in \Rn$, $\lambda\in \R$ (see e.g.\ the introduction of \cite{DM13}). 


For this reason, most of the ``sound'' approaches to 
problem
\eqref{eq:FM} consider additional assumptions. 
In particular, a global $L^\infty$ bound on the
displacements ensures one may work in the class $SBD$ of 
\emph{Special
functions with Bounded Deformation}~\cite{AmbCosDM97}, provided one considers a \emph{weak formulation}
of the problem where $K$ is replaced with the intrinsic jump set $J_u$
 of $u$ (which needs not to be closed anymore): in this space minimising
sequences are shown to be compact~\cite{BelCosDM98}, and the energy
to be lower semicontinuous. Another possible assumption
is, in 2$d$, that the crack set $K$  be  connected~\cite{DMToa02,Cha03}.

The natural space for studying~\eqref{eq:FM}, in fact, is not $SBD(\Omega)$
(which assumes that the symmetrised gradient of $u$ is a measure and
hence $u$ is in  $L^{n/(n-1)}(\Omega;\R^n)$)
but the space $GSBD(\Omega)$,
introduced by Dal~Maso in~\cite{DM13}. This space, defined by
the slicing properties of the functions, is designed in order to
contain ``all'' displacements $u$ for which the energy is finite. 
 Even if \cite{DM13} proves compactness under very mild assumptions on the integrability of displacements,  no compactness result was available in $GSBD$ for minimizing sequences of  (the weak formulation of) \eqref{eq:FM}  until very recently.

The first existence result 
without further
constraint has been proven indeed in~\cite{FriSol16}, in \emph{dimension two}. 
It relies on a delicate
construction showing a 
\emph{piecewise Korn
inequality}, in \cite{FriPWKorn} (for approximated Korn and Korn-Poincaré inequalities see also e.g.\ \cite{ CFI16ARMA, CCF16, Fri17M3AS}, for piecewise rigidity cf.\ \cite{ChaGiaPon07}).

In this paper, we prove the following general compactness result  for sequences bounded
in energy, in the space $GSBD(\Omega)$, in any dimension.  
\begin{theorem}\label{thm:main}
Let $\phi \colon \R^+ \to \R^+$ be a non-decreasing function with
\begin{equation}\label{eq:equiint}
\lim_{t\to +\infty} \frac{\phi(t)}{t}=+\infty\,,
\end{equation}
and let $(u_h)_h$ be a sequence in $GSBD(\Omega)$ such that
\begin{equation}\label{eq:boundGSBD}
\int\limits_\Omega \phi\big( |e(u_h)| \big) \dx + \hn(J_{u_h}) < M\,,
\end{equation}
for some constant $M$ independent of $h$. 
Then there exists a subsequence, still denoted by $(u_h)_h$, such that 
\begin{equation}\label{eq:defA}
A:=\{x\in \Omega \colon\, |u_h(x)|\rightarrow +\infty\}
\end{equation} 
has finite perimeter, and 
%
%
$u\in GSBD(\Omega)$ with $u=0$ on $A$ 
for which
\begin{subequations}\label{1407180906}
\begin{align}
u_h \rightarrow u  \quad\quad\,&\mathcal{L}^n\text{-a.e.\ in } \Omega \sm A\,,\label{eq:convmisura}\\
e(u_h)  \rightharpoonup e(u) \quad &\text{in } L^1(\Omega\setminus A; \Mnn)\,,\label{eq:convGradSym}\\
\hn(J_u \cup \partial^* A )\leq \liminf_{h\to \infty} \,&\hn(J_{u_h})\,.\label{eq:sciSalto}
\end{align}
\end{subequations}
\end{theorem}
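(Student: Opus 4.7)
The plan is to combine a directional slicing argument with a scalar-truncation trick that bypasses the obstruction mentioned in the introduction, then use the superlinearity of $\phi$ to handle the elastic part and a perimeter-type bound to capture the ``escape set'' $A$.

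\emph{Step 1: slicing and scalar truncation.} Fix a countable dense set of directions $\Xi\subset\Sn$. For every $\xi\in\Xi$ and $\hn$-a.e.\ $y\in \Pi^\xi:=\xi^\perp$, Dal~Maso's slicing characterisation of $GSBD$ gives that the scalar slice $\hat u_h\xy(t):=u_h(y+t\xi)\cdot \xi$ belongs to $SBV_{\text{loc}}(\Oxy)$, with $(\hat u_h\xy)'=\langle e(u_h)(y+t\xi)\xi,\xi\rangle$ a.e., and Fubini together with \eqref{eq:boundGSBD} yields an $h$-uniform bound on the integrated energies. The key point — unavailable for the vector-valued $u_h$ itself — is that the slices are scalar, so for every $M>0$ the truncations $T_M(\hat u_h\xy)$ lie in a bounded subset of $SBV(\Oxy)\cap L^\infty$ and Ambrosio's $SBV$ compactness applies slice by slice.

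\emph{Step 2: extraction of the limit and identification of $A$.} By a diagonal extraction over $h$, over the levels $M\in\N$, over the directions $\xi\in\Xi$, and over a countable set of base points $y$, one obtains an $\mathcal{L}^n$-measurable function $u$ and pointwise a.e.\ convergence $u_h\to u$ on the complement of $A:=\{x\in\Omega:\,|u_h(x)|\to\infty\}$; this uses that the truncated slices $T_M(\hat u_h\xy)$ converge $\mathcal{L}^1$-a.e.\ and, by monotonicity in $M$, identify the limit on the set where slices stay bounded. The superlinearity \eqref{eq:equiint} provides equi-integrability of $e(u_h)$, so weak compactness in $L^1(\Omega\sm A;\Mnn)$ is free, and the identification $e(u)=\text{weak-}L^1\lim e(u_h)$ on $\Omega\sm A$ follows by testing against characteristic functions combined with slicing.

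\emph{Step 3 (main obstacle): $A$ has finite perimeter.} This is where the argument is genuinely new compared to the $SBV$/$SBD$ case, because $A$ may be a truly ``macroscopic'' region produced by $u_h$ blowing up. The strategy is to estimate, for each $\xi\in\Xi$, the slice $\Axy$ of $A$: on a line where the slice $\hat u_h\xy$ diverges somewhere but is bounded elsewhere, the transition must happen across jump points of $\hat u_h\xy$, whose total $\ho$-mass is controlled by $\hn(J_{u_h})$ via the GSBD slicing formula. Summing these slice-wise perimeters against directions $\xi\in\Xi$ and invoking the characterisation of finite perimeter by slices (an $SBV^0$/coarea argument in the spirit of Ambrosio's), one obtains $\mathrm{Per}(A,\Omega)\le C\,\liminf_h \hn(J_{u_h})<\infty$. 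The technical heart here is to make this slice-wise argument stable under the diagonal extraction, so that ``blowing up on almost every slice'' really translates into ``finite perimeter in $\R^n$''.

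\emph{Step 4: membership of $u$ in $GSBD$ and lower semicontinuity.} With $u$ extended by $0$ on $A$, one applies again Dal Maso's slicing characterisation: on $\hn$-a.e.\ line, the slice of $u$ is the a.e.\ pointwise limit of the sliced $u_h$ on the complement of $\Axy$, and jumps to $0$ across $\partial^*\Axy\cap \Oxy$. Since the sliced symmetric gradients converge weakly in $L^1$ and the sliced jumps satisfy Ambrosio's 1D $SBV$ lower semicontinuity, the slices of $u$ are in $SBV$ with controlled energy, so $u\in GSBD(\Omega)$. The bound \eqref{eq:sciSalto} is obtained by combining the slicewise Ambrosio inequality, which on each line delivers $\ho(J_{\hat u\xy}\cup \partial\Axy\cap\Oxy)\le \liminf_h \ho(J_{\hat u_h\xy})$, with Fubini and the disjointness (up to $\hn$-null sets) of $J_u$ and $\partial^*A$ that is built into the definition of $u$ as being $0$ on $A$. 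The genuinely delicate step is Step 3; the remaining parts are, once the correct framework is in place, variations on classical $SBV$ slicing techniques.
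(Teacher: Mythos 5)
The central gap is in your Step~2. You cannot extract the subsequence of $(u_h)$ by diagonalising over one-dimensional slices, for two reasons. First, the slices $\hat u_h{}^\xi_y$ for fixed $\xi$ are indexed by $y$ ranging over the $(n{-}1)$-dimensional set $\Pi^\xi$, which is uncountable; restricting to a ``countable set of base points $y$'' (your words) picks out an $\mathcal{L}^n$-negligible family of lines, so the resulting pointwise convergence says nothing about $u_h$ on a set of positive measure. Second, one might try to avoid the line-by-line extraction by applying $n$-dimensional $SBV$ compactness to the truncated scalar function $\tau(u_h\cdot\xi)$ on $\Omega$; but the $GSBD$ definition only controls the directional derivative $\mathrm{D}_\xi\big(\tau(u_h\cdot\xi)\big)$, not the transverse derivatives $\mathrm{D}_\eta$ for $\eta\not\parallel\xi$. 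So $\tau(u_h\cdot\xi)$ is \emph{not} known to lie in a bounded subset of $BV(\Omega)$, and the slices in different directions cannot be glued into $n$-dimensional compactness by slicing alone. This is exactly the obstruction that makes the $GSBD$ case genuinely harder than $GSBV$, where truncation of the vector function is available.

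The paper circumvents this with an ingredient your proposal omits entirely: the Korn--Poincar\'e inequality for functions with small jump set (Proposition~\ref{prop:3CCF16}, from~\cite{CCF16}). On each small cube $q_{k,z}$, $u_h$ is approximated in $L^1$, outside a set of controlled measure, by an infinitesimal rigid motion $a^h_{k,z}$. Since rigid motions form a finite-dimensional space, the sequences $(a^h_{k,z})_h$ have (up to subsequences) a clean trichotomy of behaviours, and a Cauchy estimate on $\tau(u_h\cdot e_i)$ in $L^1(\Omega)$ --- not slice by slice, but in $n$ dimensions --- follows by comparing $u_h$ to $a^h_{k,z}$ on every cube and letting $k\to\infty$. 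Only after this genuine $n$-dimensional compactness is established does the paper pass to slicing, and then only to prove the lower semicontinuity \eqref{eq:sciSalto}, the weak convergence \eqref{eq:convGradSym}, and closure in $GSBD$. There, the relevant subsequences \emph{are} taken along slices, but this is harmless because the subsequence of $(u_h)$ is already fixed and the slicewise extractions are used only inside a Fatou argument to bound an integral, not to build the limit function. A further point your sketch does not address: one must check that for $\hn$-a.e.\ $\xi$ the scalar components $u_h\cdot\xi$ actually blow up a.e.\ on $A$ (a vector sequence can diverge while a given component stays bounded); the paper handles this via Lemma~\ref{le:referee2}. Your Steps~3 and~4 read as reasonable once $n$-dimensional compactness and the blow-up along a.e.\ direction are in hand, but as written they rest on the unrepaired hole in Step~2.
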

The proof of this theorem
is in our opinion simpler than~\cite{FriSol16}, 
 even if a fundamental tool is 
a quite technical Korn-Poincar\'e inequality for functions
with small jump set, proved in~\cite{CCF16} and employed also in \cite{ChaConFra18ARMA, ChaConIur17, ChaCri17}.
We combine this inequality with arguments in the spirit of Rellich's type compactness theorems. 

Theorem~\ref{thm:main} gives then the existence of minimisers for the Griffith energy with Dirichlet boundary conditions in the weak formulation (see Theorem~\ref{teo:existminGrif}), which by results in~\cite{CFI17DCL,ChaConIur17} satisfy the properties of strong solutions in the interior of $\Omega$. 
 In the forthcoming paper \cite{CC19} we prove existence of solutions for the strong formulation \eqref{eq:FM} by extending the regularity theorems in~\cite{CFI17DCL,ChaConIur17} up to the boundary, when $\dod$ is of class $C^1$ and $u_0$ is Lipschitz.


The major issue for establishing the compactness result of Theorem~\ref{thm:main} comes from the lack of control on both the displacement and its full gradient, as is natural in the study of brittle fracture in small strain  (linearised) elasticity~\cite{Griffith}. 

A bound such as \eqref{eq:boundGSBD} for the full gradient in place of the symmetrised gradient is available 
 for brittle fractures models in finite strain elasticity or in small strain elasticity in the simplified \emph{antiplane case} (i.e.\ when the displacement $u$ is vertical and depends only on the horizontal components). 
In these cases, the energy is closely related to the \emph{Mumford-Shah functional} in image reconstruction \cite{MumSha} (which however includes a fidelity term, artificial from a mechanical standpoint). 
In this  context, the original strategy of passing through a weak formulation in terms of $u$ was first proposed 
 by De Giorgi and realised by Ambrosio \cite{Amb89UMI, Amb90GSBV, Amb94, AmbNew95}, for the existence of weak solutions, and De Giorgi, Carriero, Leaci in \cite{DeGCarLea} (see also e.g.\ \cite{CarLea91, FonFus97}), for the regularity giving the improvement to strong solutions (an alternative approach, where the discontinuity set is the main variable, has been successfully employed in \cite{DMMorSol92, MadSol01}).


Ambrosio's results are obtained in the space $GSBV$ \cite{DeGioAmb88GBV}, and have been extended to $GSBD$ by Dal~Maso in~\cite{DM13}. In both cases, a control of the
values is required to obtain compactness, guaranteeing that the set $A$ in Theorem~\ref{thm:main} is empty.
{
Without such a control, it is still relatively 
simple to obtain a $GSBV$ version of Theorem~\ref{thm:main}.
For instance, in the scalar case
one can consider as in~\cite{Amb89UMI} the sequences of truncated functions
$u_k^N:=\max\{ -N,\min\{ u_k, N\}\}$ for any integer $N\ge 1$, which
are compact in $BV$ and converge up to subsequences. Then, by a diagonal
argument, sending then $N$ to $+\infty$, one builds a subsequence 
$(u_{k_h})_h$ which converges a.e.\ to some $u$, except on a possible set 
$A$ where it goes to $+\infty$ or $-\infty$.
The scalar version of~\eqref{eq:convGradSym} 
is obtained exactly as in~\cite{Amb89UMI} (see in particular~\cite[Prop.~4.4]{Amb89UMI}), 
considering perturbations $w\in L^1(\Omega)$ with $w=0$ a.e.~in $A$.
One possible way to derive inequality~\eqref{eq:sciSalto} 
is then by slicing arguments, similar to (but simpler than) the 
arguments  in Section~\ref{Sec:proofMain} of  the current paper. 
The extension to the vectorial case is not difficult in $GSBV$.
}
%
%
%
%
%
%
%
%

This strategy however fails in our case since, as already mentioned, the
space $GSBD$ is not stable by truncations. The way out  to get compactness without any assumption on the displacements  is to locally approximate $GSBD$ functions with piecewise infinitesimal rigid motions, by means of the Korn-Poincaré inequality in \cite{CCF16},
and use that such motions belong to a finite dimensional space.  We then obtain compactness with respect to the convergence in $\mathcal{L}^n$-measure, but still, we can not exclude the existence of a set $A$ of points where the limit is not in $\Rn$. A slicing argument
then is used to show that $A$ has finite perimeter, whose measure is controlled by \eqref{eq:sciSalto}.
(Existence for~\eqref{eq:FM} is then deduced by considering the
limit of a minimising sequence and setting in $A$
the limit function equal to 0, or to any ground state of the elastic energy.)



A more general (and difficult) 
approach, for $GSBV^p$, has been 
proposed by Friedrich in \cite{Fri19}: 
there, the set $A$ is a priori 
removed by a careful modification at the level of the minimising sequence, with a control of
the energy. 
Friedrich and Solombrino also prove in~\cite{FriSol16} existence of quasistatic evolutions in dimension two, extending in that case the antiplane result by Francfort and Larsen in \cite{FraLar03}, 
 (see \cite{BabGia14} for the existence of \emph{strong} quasistatic evolutions in dimension two, and e.g.\ \cite{DMFraToa07, DMLaz10} for quasistatic evolutions for brittle fractures with finite strain elasticity).  

\section{Notation and preliminaries}\label{Sec1}
For every $x\in \Rn$ and $\varrho>0$ let $B_\varrho(x)$ be the open ball with center $x$ and radius $\varrho$. For $x$, $y\in \Rn$, we use the notation $x\cdot y$ for the scalar product and $|x|$ for the norm.
We denote by $\mathcal{L}^n$ and $\mathcal{H}^k$ the $n$-dimensional Lebesgue measure and the $k$-dimensional Hausdorff measure. For any locally compact subset $B$ of $\Rn$, the space of bounded $\R^m$-valued Radon measures on $B$ is denoted by $\mathcal{M}_b(B;\R^m)$. For $m=1$ we write $\mathcal{M}_b(B)$ for $\mathcal{M}_b(B;\R)$ and $\mathcal{M}^+_b(B)$ for the subspace of positive measures of $\mathcal{M}_b(B)$. For every $\mu \in \mathcal{M}_b(B;\R^m)$, its total variation is denoted by $|\mu|(B)$.
We write $\chi_E$ for the indicator function of any $E\subset \R^n$, which is 1 on $E$ and 0 otherwise. 
 We call \emph{infinitesimal rigid motion} any affine function
with skew-symmetric gradient. Let us also set $\widetilde{\R}:=\R \cup \{-\infty, +\infty\}$ and $\R^*:= \R \sm \{0\}$. 
\begin{definition}\label{def:aplim}
Let $E\subset \Rn$, $v\colon E \to \R^m$ an $\mathcal{L}^n$-measurable function, $x\in \Rn$ such that
\begin{equation*}
\limsup_{\varrho\to 0^+}\frac{\mathcal{L}^n(E\cap B_\varrho(x))}{\varrho^n}>0\,.
\end{equation*}
A vector $a\in \Rn$ is the \emph{approximate limit} of $v$ as $y$ tends to $x$ if for every $\varepsilon>0$
\begin{equation*}
\lim_{\varrho \to 0^+}\frac{\mathcal{L}^n(E \cap B_\varrho(x)\cap \{|v-a|>\varepsilon\})}{\varrho^n}=0\,,
\end{equation*}
and then we write
\begin{equation}\label{3105171542}
\aplim \limits_{y\to x} v(y)=a\,.
\end{equation}
\end{definition}
\begin{remark}\label{rem:3105171601}
Let $E$, $v$, $x$, and $a$ be as in Definition~\ref{def:aplim} and let $\psi$ be a homeomorphism between $\R^m$ and a bounded open subset of $\R^m$. Then \eqref{3105171542} holds if and only if \begin{equation*}
\lim_{\varrho\to 0^+}\frac{1}{\varrho^n}\hspace{-1em}\int \limits_{E\cap B_\varrho(x)}\hspace{-1em}|\psi(v(y))-\psi(a)|\,\mathrm{d}y=0\,.
\end{equation*}
\end{remark}

\begin{definition}
Let $U\subset \Rn$ open, and $v\colon U\to \R^m$ be $\mathcal{L}^n$-measurable. The \emph{approximate jump set} $J_v$ is the set of points $x\in U$ for which there exist $a$, $b\in \R^m$, with $a \neq b$, and $\nu\in \Sn$ such that
\begin{equation*}
\aplim\limits_{(y-x)\cdot \nu>0,\, y \to x} v(y)=a\quad\text{and}\quad \aplim\limits_{(y-x)\cdot \nu<0, \, y \to x} v(y)=b\,.
\end{equation*}
The triplet $(a,b,\nu)$ is uniquely determined up to a permutation of $(a,b)$ and a change of sign of $\nu$, and is denoted by $(v^+(x), v^-(x), \nu_v(x))$. The jump of $v$ is the function 
defined by $[v](x):=v^+(x)-v^-(x)$ for every $x\in J_v$. Moreover, we define
\begin{equation}
J_v^1:=\{x\in J_v \colon |[v](x)|\geq 1\}\,.
\end{equation}
\end{definition}
\begin{remark}
By Remark~\ref{rem:3105171601}, $J_v$ and $J^1_v$ are Borel sets and $[v]$ is a Borel function. By Lebesgue's differentiation theorem, it follows that $\mathcal{L}^n(J_v)=0$.
\end{remark}

\par
\medskip
\paragraph{\bf $BV$ and $BD$ functions.}
If $U\subset \Rn$ open, a function $v\in L^1(U)$ is a \emph{function of bounded variation} on $U$, and we write $v\in BV(U)$, if $\mathrm{D}_i v\in \mathcal{M}_b(U)$ for $i=1,\dots,n$, where $\mathrm{D}v=(\mathrm{D}_1 v,\dots, \mathrm{D}_n v)$ is its distributional gradient. A vector-valued function $v\colon U\to \R^m$ is in $BV(U;\R^m)$ if $v_j\in BV(U)$ for every $j=1,\dots, m$.
The space $BV_{\mathrm{loc}}(U)$ is the space of $v\in L^1_{\mathrm{loc}}(U)$ such that $\mathrm{D}_i v\in \mathcal{M}_b(U)$ for $i=1,\dots,n$. 

A $\mathcal{L}^n$-measurable bounded set $E\subset \R^n$ is a set of \emph{finite perimeter} if $\chi_E$ is a function of bounded variation. The \emph{reduced boundary} of $E$, denoted by $\partial^*E$, is the set of points $x\in \mathrm{supp}\, |\mathrm{D}\chi_E|$ such that the limit $\nu_E(x):=\lim_{\varrho \to 0^+}\frac{\mathrm{D}\chi_E(B_\varrho(x))}{|\mathrm{D}\chi_E|(B_\varrho(x))}$ exists and satisfies $|\nu_E(x)|=1$. The reduced boundary is countably $(\hn, n-1)$ rectifiable, and the function $\nu_E$ is called \emph{generalised inner normal} to $E$.

A function $v\in L^1(U;\Rn)$ belongs to the space of \emph{functions of bounded deformation} if its distributional symmetric gradient $\mathrm{E}v$ belongs to $\mathcal{M}_b(U;\Rn)$.
It is well known (see \cite{AmbCosDM97, Tem}) that for $v\in BD(U)$, $J_v$ is countably $(\hn, n-1)$ rectifiable, and that
\begin{equation}
\mathrm{E}v=\mathrm{E}^a v+ \mathrm{E}^c v + \mathrm{E}^j v\,,
\end{equation}
where $\mathrm{E}^a v$ is absolutely continuous with respect to $\mathcal{L}^n$, $\mathrm{E}^c v$ is singular with respect to $\mathcal{L}^n$ and such that $|\mathrm{E}^c v|(B)=0$ if $\hn(B)<\infty$, while $\mathrm{E}^j v$ is concentrated on $J_v$. The density of $\mathrm{E}^a v$ with respect to $\mathcal{L}^n$ is denoted by $e(v)$, and we have that (see \cite[Theorem~4.3]{AmbCosDM97} and recall \eqref{def:aplim}) for $\mathcal{L}^n$-a.e.\ $x\in U$
\begin{equation}\label{3105171931}
\aplim\limits_{y\to x} \frac{\big(v(y)-v(x)-e(v)(x)(y-x)\big)\cdot (y-x)}{|y-x|^2}=0\,.
\end{equation}
The space $SBD(U)$ is the subspace of all functions $v\in BD(U)$ such that $\mathrm{E}^c v=0$, while for $p\in (1,\infty)$
\begin{equation*}
SBD^p(U):=\{v\in SBD(U)\colon e(v)\in L^p(\Omega;\Mnn),\, \hn(J_v)<\infty\}\,.
\end{equation*}
Analogous properties hold for $BV$, as the countable rectifiability of the jump set and the decomposition of $\mathrm{D}v$, and the spaces $SBV(U;\R^m)$ and $SBV^p(U;\R^m)$ are defined similarly, with $\nabla v$, the density of $\mathrm{D}^a v$, in place of $e(v)$.
For a complete treatment of $BV$, $SBV$ functions and $BD$, $SBD$ functions, we refer to \cite{AFP} and to \cite{AmbCosDM97, BelCosDM98, Bab15, Tem}, respectively.
\par
\medskip
\paragraph{\bf $GBD$ functions.}
We now recall the definition and the main properties of the space $GBD$ of \emph{generalised functions of bounded deformation}, introduced in \cite{DM13}, referring to that paper for a general treatment and more details. Since the definition of $GBD$ is given by slicing (differently from the definition of $GBV$, \textit{cf.}~\cite{DeGioAmb88GBV, Amb90GSBV}), we introduce before some notation.

Fixed $\xi \in \Sn:=\{\xi \in \Rn\colon |\xi|=1\}$, for any $y\in \Rn$ and $B\subset \Rn$ let
\begin{equation*}
\Pi^\xi:=\{y\in \Rn\colon y\cdot \xi=0\},\qquad B^\xi_y:=\{t\in \R\colon y+t\xi \in B\}\,,
\end{equation*}
and for every function $v\colon B\to \R^n$ and $t\in B^\xi_y$ let
\begin{equation*}
v^\xi_y(t):=v(y+t\xi),\qquad \widehat{v}^\xi_y(t):=v^\xi_y(t)\cdot \xi\,.
\end{equation*}

\begin{definition}[\cite{DM13}]\label{def:GBD}
Let $\Omega\subset \Rn$ be bounded and open, and $v\colon \Omega\to \Rn$ be $\mathcal{L}^n$-measurable. Then $v\in GBD(\Omega)$ if there exists $\lambda_v\in \mathcal{M}^+_b(\Omega)$ such that  one of the following equivalent conditions holds true  for every $\xi \in \Sn$:
\begin{itemize}
\item[(a)] for every $\tau \in C^1(\R)$ with $-\tfrac{1}{2}\leq \tau \leq \tfrac{1}{2}$ and $0\leq \tau'\leq 1$, the partial derivative $\mathrm{D}_\xi\big(\tau(v\cdot \xi)\big)=\mathrm{D}\big(\tau(v\cdot \xi)\big)\cdot \xi$ belongs to $\mathcal{M}_b(\Omega)$, and for every Borel set $B\subset \Omega$ 
\begin{equation*}
\big|\mathrm{D}_\xi\big(\tau(v\cdot \xi)\big)\big|(B)\leq \lambda_v(B);
\end{equation*}
\item[(b)] $\widehat{v}^\xi_y \in BV_{\mathrm{loc}}(\Omega^\xi_y)$ for $\hn$-a.e.\ $y\in \Pi^\xi$, and for every Borel set $B\subset \Omega$ 
\begin{equation}\label{3105171445}
\int \limits_{\Pi^\xi} \Big(\big|\mathrm{D} {\widehat{v}}_y^\xi\big|\big(B^\xi_y\setminus J^1_{{\widehat{v}}^\xi_y}\big)+ \mathcal{H}^0\big(B^\xi_y\cap J^1_{{\widehat{v}}^\xi_y}\big)\Big)\dh(y)\leq \lambda_v(B)\,,
\end{equation}
where
$J^1_{{\widehat{u}}^\xi_y}:=\left\{t\in J_{{\widehat{u}}^\xi_y} : |[{\widehat{u}}_y^\xi]|(t) \geq 1\right\}$.
\end{itemize} 
The function $v$ belongs to $GSBD(\Omega)$ if $v\in GBD(\Omega)$ and $\widehat{v}^\xi_y \in SBV_{\mathrm{loc}}(\Omega^\xi_y)$ for every $\xi \in \Sn$ and for $\hn$-a.e.\ $y\in \Pi^\xi$.
\end{definition}
$GBD(\Omega)$ and $GSBD(\Omega)$ are vector spaces, as stated in \cite[Remark~4.6]{DM13}, and one has the inclusions $BD(\Omega)\subset GBD(\Omega)$, $SBD(\Omega)\subset GSBD(\Omega)$, which are in general strict (see \cite[Remark~4.5 and Example~12.3]{DM13}).
For every $v\in GBD(\Omega)$
the \emph{approximate jump set} $J_v$ is still countably $(\hn,n-1)$-rectifiable (\textit{cf.}~\cite[Theorem~6.2]{DM13}) and can be reconstructed from the jump of the slices $\widehat{v}^\xi_y$ (\cite[Theorem~8.1]{DM13}).
Indeed, for every $C^1$ manifold $M\subset \Omega$ with unit normal $\nu$, it holds that for $\hn$-a.e.\ $x\in M$ there exist the \emph{traces} $v_M^+(x)$, $v_M^-(x)\in \Rn$ such that
\begin{equation}\label{0106172148}
\aplim \limits_{\pm(y-x)\cdot \nu(x)>0, \, y\to x} \hspace{-1em} v(y)=v_M^{\pm}(x)
\end{equation}
and they can be reconstructed from the traces of the one-dimensional slices (see \cite[Theorem~5.2]{DM13}).
Every $v\in GBD(\Omega)$ has an \emph{approximate symmetric gradient} $e(v)\in L^1(\Omega;\Mnn)$, characterised by \eqref{3105171931} and such that for every $\xi \in \Sn$ and $\hn$-a.e.\ $y\in\Pi^\xi$
\begin{equation}\label{3105171927}
e(v)^\xi_y \xi\cdot \xi=\nabla \widehat{v}^\xi_y \quad\mathcal{L}^1\text{-a.e.\ on }\Omega^\xi_y\,.
\end{equation}

By these properties of slices it follows that, if 
$v \in GSBD(\Omega)$ with $e(v)\in L^1(\Omega;\Mnn)$ and $\hn(J_v)<+\infty$, then for every Borel set $B\subset \Omega$
\begin{equation}\label{0101182248}
\hn(J_v\cap B)=(2 \omega_{n-1})^{-1} \int\limits_{\Sn} \bigg(\int \limits_{\Pi^\xi} \ho(J_{v\xy}\cap B\xy)\,\mathrm{d}\hn(y)\bigg)\,\mathrm{d}\hn(\xi)
\end{equation}
and the two conditions in the definition of $GSBD$ for $v$ hold for $\lambda_v \in \mathcal{M}_b^+(\Omega)$ such that
\begin{equation}\label{0201181308}
\lambda_v(B)\leq \int_B |e(v)| \dx + \hn(J_v \cap B)\,,
\end{equation} 
for every Borel set $B\subset \Omega$ (\textit{cf.}~also \cite[Theorem~1]{Fri17ARMA} and \cite[Remark~2]{Iur14}).

We now recall the following result, proven 
in~\cite[Proposition~2]{CCF16}. Notice that the proposition is therein stated in $SBD$, but the proof, which is based on the Fundamental Theorem of Calculus along lines, still holds for $GSBD$, with small adaptations.
%
%
\begin{proposition}[\cite{CCF16}]\label{prop:3CCF16}
Let $Q_r =(-r,r)^n$, $v\in GSBD(Q)$, $p\in [1,\infty)$. Then there exist a Borel set $\omega\subset Q_r$ and an affine function $a\colon \Rn\to\Rn$ with $e(a)=0$ such that 
\begin{equation*}
\mathcal{L}^n(\omega)\leq cr \hn(J_v)\,
\end{equation*} 
and
\begin{equation}\label{prop3iCCF16}
\int\limits_{Q_r\setminus \omega} |v-a|^{p} \dx\leq cr^{p} \int\limits_{Q_r} |e(v)|^p\dx\,.
\end{equation}
The constant $c$ depends only on $p$ and $n$.
\end{proposition}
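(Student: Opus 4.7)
The plan is to reduce the bound to a one-dimensional analysis of the slices of $v$, exploiting the identity $\nabla \widehat{v}^\xi_y(t) = e(v)(y+t\xi)\xi\cdot\xi$ valid for $v \in GSBD(Q_r)$ by \eqref{3105171927}, together with the fact that the jump set of each slice sits above $J_v$. First I would fix the $n$ coordinate directions $\xi_i = e_i$ and use the elementary bound $|w|^p \leq c_n \sum_{i=1}^n |w\cdot e_i|^p$ valid for every $w \in \R^n$, which reduces matters to estimating $\int_{Q_r\setminus\omega}|(v-a)\cdot e_i|^p\dx$ for each $i$ separately.

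For each $i$ I would define
\[
\omega_i := \{x \in Q_r : (x + \R e_i) \cap J_v \cap Q_r \neq \emptyset\},
\]
i.e. the union of lines in direction $e_i$ through $Q_r$ that meet $J_v$. A Fubini/projection argument on the hyperplane $\Pi^{e_i}$ gives $\mathcal{L}^n(\omega_i) \leq 2r\,\hn(J_v)$, since projections do not increase $\hn$-measure and each line contributes length $\leq 2r$. Setting $\omega:=\bigcup_i \omega_i$ yields $\mathcal{L}^n(\omega) \leq 2nr\,\hn(J_v)$. On every slice $\widehat{v}^{e_i}_y$ with $y + \R e_i$ avoiding $J_v$, the function $\widehat{v}^{e_i}_y$ belongs to $W^{1,1}_{\mathrm{loc}}((Q_r)^{e_i}_y)$, and the fundamental theorem of calculus gives
\[
\widehat{v}^{e_i}_y(t) - \widehat{v}^{e_i}_y(s) = \int_s^t e(v)_{ii}(y+\tau e_i)\,d\tau.
\]

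Next I would construct the infinitesimal rigid motion $a(x)=Ax+b$, with $A$ skew-symmetric. A direct computation using $Ae_i\cdot e_i = 0$ yields $\widehat{a}^{e_i}_y(t) = (Ay)_i + b_i$, which is constant in $t$; hence $a$ is ``compatible'' with slicewise Poincar\'e only through a $y$-dependent constant on each slice. I would then fix the $n(n+1)/2$ parameters of $(A,b)$ by a global averaging procedure matching $\widehat{a}^{e_i}_y$ to the slicewise mean of $\widehat{v}^{e_i}_y$ across a large family of good $y$'s in $\Pi^{e_i}$, using skew-symmetry of $A$ to make the overdetermined system consistent. Given such an $a$, a one-dimensional Poincar\'e inequality on each jump-free slice (whose length is at most $2r$) combined with Fubini yields $\int_{Q_r\setminus\omega}|(v-a)\cdot e_i|^p \dx \leq cr^p \int_{Q_r}|e(v)|^p\dx$, and summing over $i$ produces \eqref{prop3iCCF16}.

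The main obstacle is the construction of $a$ above: the slicewise means of $\widehat{v}^{e_i}_y$ depend on $y$ in a way that no single infinitesimal rigid motion can exactly match. One must exploit the control on $e(v)$ to show that these means oscillate slowly enough in $y$ (in the directions orthogonal to $e_i$) that a well-chosen $(A,b)$ leaves a defect absorbable into $\int_{Q_r}|e(v)|^p\dx$. This is carried out in \cite{CCF16} in the $SBD$ setting via a careful selection of representatives and a cross-direction averaging; the adaptation to $GSBD$ requires replacing truncation arguments (forbidden in $GSBD$ since truncations destroy the integrability of $e(u)$) by direct use of the slicewise $BV_{\mathrm{loc}}$ structure of Definition~\ref{def:GBD}, which is exactly the ``small adaptation'' alluded to in the statement.
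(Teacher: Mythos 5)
Your framework---coordinate-direction slicing, removal of the lines that meet $J_v$, the Fundamental Theorem of Calculus and a one-dimensional Poincar\'e inequality on the jump-free slices---is the right one, and it is consistent with the paper's remark that the proof in \cite{CCF16} is ``based on the Fundamental Theorem of Calculus along lines.'' The measure bound $\mathcal{L}^n(\omega_i)\leq 2r\,\hn(J_v)$ by projecting $J_v\cap Q_r$ onto $\Pi^{e_i}$ is correct, and the slicewise Poincar\'e estimate gives control of $v\cdot e_i-m_i(y)$ by $\int|e(v)_{ii}|^p$ along each retained line, where $m_i(y)$ denotes the mean of $\widehat v^{\,e_i}_y$ over $(Q_r)^{e_i}_y$.

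However, the proposal does not close, and the gap is exactly where the content of the proposition lies. One must replace the $y$-dependent slicewise means by a single infinitesimal rigid motion $a(x)=Ax+b$, i.e.\ prove an estimate of the form
\begin{equation*}
\int_{Q_r\setminus\omega}\big|m_i(\pi_{e_i}(x)) - (A\,\pi_{e_i}(x))\cdot e_i - b_i\big|^p\dx \leq c\,r^p\int_{Q_r}|e(v)|^p\dx,
\end{equation*}
where $\pi_{e_i}$ is the orthogonal projection onto $\Pi^{e_i}$, and your treatment of this (``global averaging procedure,'' ``exploit the control on $e(v)$ to show that these means oscillate slowly enough in $y$'') names the target without giving an argument. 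This is not a small technicality: one needs a genuine second layer of slicing in the coordinate planes $\mathrm{span}(e_i,e_j)$ to control the transverse variation of $m_i$ via the Saint-Venant identity $\partial_j\partial_k v_i=\partial_j e(v)_{ik}+\partial_k e(v)_{ij}-\partial_i e(v)_{jk}$ along good lines, together with a consistent choice of exceptional sets across the $n(n-1)/2$ pairs of directions so that the one-dimensional estimates can actually be assembled into a single $(A,b)$. Without that, the one-dimensional Poincar\'e inequalities do not produce any single $a$ that works simultaneously for all $i$. The ``small adaptations'' from $SBD$ to $GSBD$ that you correctly identify (replacing truncation arguments by the slicewise $BV_{\mathrm{loc}}$ structure) are real but minor; they are not the part your proposal leaves out.
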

 
We conclude the section with a technical lemma.
 \begin{lemma}\label{le:referee2}
Let $E \subset \Rn$ Borel, $v_h\colon E \to \Rn$ for every $h$, and consider the $n$ sequences $(v_h \cdot e_i)_h$, obtained by taking every component of $v_h$ with respect to the canonical basis of $\Rn$ $\{e_1,\dots,e_n\}$. Assume that every $(v_h \cdot e_i)_h$ converges pointwise $\mathcal{L}^n$-a.e.\ to a $v_i \colon E \to \widetilde{\R}$, and that for $\mathcal{L}^n$-a.e.\ $x \in E$ there is $i\in\{1,\dots,n\}$ for which $v_i(x) \in \{-\infty, +\infty\}$.
Then for $\hn$-a.e.\ $\xi \in \Sn$

\begin{equation}\label{1307181345}
|v_h \cdot \xi| \rightarrow +\infty \quad \mathcal{L}^n\text{-a.e.\ in } E \,.
\end{equation}
\end{lemma}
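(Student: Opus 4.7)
The plan is to reduce the statement to a pointwise claim by Fubini--Tonelli: it will suffice to show that for $\mathcal{L}^n$-a.e.\ $x \in E$, the set $B_x := \{\xi \in \Sn : |v_h(x) \cdot \xi| \not\to +\infty\}$ is $\hn$-null, since then Fubini applied to the jointly measurable set $\{(x,\xi) : |v_h(x) \cdot \xi| \not\to +\infty\}$ yields the conclusion for $\hn$-a.e.\ $\xi$. I fix a ``good'' $x$, on which every $v_h^i(x)$ converges in $\widetilde{\R}$ to $v_i(x)$ and the index set $I(x) := \{i : v_i(x) = \pm\infty\}$ is nonempty; consequently $|v_h(x)| \to +\infty$. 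Writing
\begin{equation*}
v_h(x) \cdot \xi \;=\; \sum_{i \notin I(x)} \xi_i v_h^i(x) \;+\; \sum_{i \in I(x)} \xi_i v_h^i(x),
\end{equation*}
the first summand converges in $\R$ to $\sum_{i \notin I(x)} \xi_i v_i(x)$ and is therefore bounded in $h$, so the divergence of $|v_h(x)\cdot\xi|$ is equivalent to that of the ``infinite part'' $S_h^\xi(x) := \sum_{i \in I(x)} \xi_i v_h^i(x)$.

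The central step is a normalisation. After extracting a further subsequence (uniformly in $x$, see the final paragraph), I aim to have pointwise convergence
\begin{equation*}
w_h(x) \;:=\; \frac{(v_h^i(x))_{i \in I(x)}}{\|(v_h^i(x))_{i \in I(x)}\|_2} \;\longrightarrow\; w(x) \in S^{|I(x)|-1} \qquad \text{for $\mathcal{L}^n$-a.e.\ } x \in E,
\end{equation*}
where $w(x)$ is a unit vector supported on the indices in $I(x)$. With this in hand, I factor $S_h^\xi(x) = \|(v_h^i(x))_{i \in I(x)}\|_2 \cdot \bigl(\xi|_{I(x)} \cdot w_h(x)\bigr)$. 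Since the scalar factor diverges to $+\infty$ while $\xi|_{I(x)} \cdot w_h(x) \to \xi|_{I(x)} \cdot w(x)$, it follows that $|S_h^\xi(x)| \to +\infty$ whenever $\xi|_{I(x)} \cdot w(x) \neq 0$. Consequently $B_x$ is contained in the single hyperplane $\{\xi \in \Sn : \xi|_{I(x)} \cdot w(x) = 0\}$, which has $\hn$-measure zero; this is precisely what the Fubini step requires.

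The hard part is arranging the subsequence extraction so that it is $x$-independent. For each fixed $x$, compactness of $S^{|I(x)|-1}$ trivially provides a convergent subsequence of $w_h(x)$, but producing a single subsequence valid for $\mathcal{L}^n$-a.e.\ $x$ simultaneously is the technical core of the argument. My intended route combines Egorov's theorem with a diagonal extraction: decompose $E$ (up to a null set) into countably many measurable pieces on which the pattern $(I(x),\mathrm{sgn}(v_i(x)))$ is constant and each convergence $v_h^i \to v_i$ in $\widetilde{\R}$ is uniform; on each such piece the directions $w_h$ can be controlled, and a diagonal extraction across the countable collection of pieces yields the desired single subsequence. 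Once this is in place, the first paragraph's Fubini argument closes the proof.
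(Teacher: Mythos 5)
Your Fubini reduction is sound, and the per-$x$ argument works \emph{provided} the normalized diverging part $w_h(x)$ converges; then indeed $B_x\subset w(x)^\perp$ is $\hn$-null. The real gap is the $x$-uniform subsequence extraction, which you call ``the technical core'' but leave as a sketch, and which cannot be carried out as described. Egorov applied to the convergences $v_h^i\to v_i$ in $\widetilde{\R}$ controls the \emph{components} but says nothing about the \emph{ratios} of the diverging components: those can oscillate in an essentially $x$-dependent way (take $v_h^1(x)=h$ and $v_h^2(x)=-h\bigl(2+\cos(hx_1)\bigr)$), and then no fixed subsequence makes $w_h(x)$ converge a.e., for the same reason that $\sin(hx)$ admits no a.e.-convergent subsequence. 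Worse, the pointwise claim $\hn(B_x)=0$ for the original sequence is simply not true in general: in $n=2$ take $v_h(x)\equiv\bigl(h,\,-h\,g(h)\bigr)$ with $g(h)=1+(h-2^k)2^{-k}$ on the dyadic block $h\in[2^k,2^{k+1})$; both components tend to $\pm\infty$ in $\widetilde{\R}$, yet for every $\xi$ with $\xi_1/\xi_2\in[1,2]$ each block contains an index $h$ with $|v_h\cdot\xi|\leq C(\xi)$, so $\liminf_h|v_h\cdot\xi|<\infty$ on an arc of positive $\hn$-measure. The set $B_x$ can therefore itself have positive measure, so no diagonalization scheme can rescue your reduction without first passing to a subsequence that depends on the very structure you are trying to uncover.

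The paper proceeds by a quite different, counting-type argument on the $x$-side instead of a compactness argument along $h$: it partitions (the relevant part of) $E$ according to the limit pattern of the ratios $v_h\cdot e_i/v_h\cdot e_j$ among the diverging components, notes that distinct finite limit tuples occupy disjoint sets of $x$ and hence that only countably many tuples can be realized on a set of positive $\mathcal{L}^n$-measure, and observes that each such tuple contributes a single lower-dimensional (hence $\hn$-null) set of bad directions in $\Sn$. There is no per-$x$ normalization and no attempt at a global subsequence. If you wish to salvage your route, the missing ingredient is a mechanism showing that only a null set of $x$'s can have a direction-limit set $L(x)$ for which $\bigcup_{w\in L(x)}w^\perp$ has positive $\hn$-measure; the paper's disjointness-and-countability observation is precisely the substitute for that.
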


\begin{proof}
On the sets
\[
E_i:=\{|v_h\cdot e_i| \rightarrow +\infty\} \cap \bigcap_{j\neq i}\{ 
\limsup_{h\to\infty} (|v_h\cdot e_j|/|v_h\cdot e_i|) < +\infty\}\,,
\]
we have that \eqref{1307181345} holds for every $\xi$ in $\{\xi\in \Sn \colon \, \xi_i\neq 0\}$, which is of full $\hn$ measure in $\Sn$. 

Let us thus consider the case when there are $m$ components of $v_h$, with $1<m\leq n$, that we may assume up to a permutation
$v_h \cdot e_1, \dots,\, v_h \cdot e_m$, such that $\frac{v_h \cdot e_i}{v_h \cdot e_j} \to \xi_{i,j} \in \R^*$ for $1\leq i < j \leq m$ and $|\frac{v_h \cdot e_i}{v_h \cdot e_j}| \to +\infty$ for $i \in \{1,\dots,m\}$ and $j \in\{m+1, \dots, n\}$ (if $m<n$).
In this case \eqref{1307181345} does not hold only for
\[
\Sn \cap (1,\xi_{1,2}^{-1}, \dots, \xi_{1,m}^{-1},0\dots,0)^\perp\,,
\]
which has dimension $n-2$. 
Notice now that for every $m$ for which $m$ components go faster to infinity than the other ones, there is an at most countable collection of $(\xi_{1,2},\dots,\xi_{1,m}) \in (\R^*)^{m-1}$ for which $\frac{v_h \cdot e_1}{v_h \cdot e_j} \to \xi_{1,j}$ for $j\in \{2,\dots,m\}$ on a subset of $E$ of positive $\mathcal{L}^n$ measure.
Thus \eqref{1307181345} holds for every $\xi$ except on an at most countable union of $\hn$-negligible sets of $\Sn$.
\end{proof}

\section{The main compactness and lower semicontinuity result}\label{Sec:proofMain}

In this section we prove Theorem~\ref{thm:main}, the main result of the paper.

%
%

\begin{proof}[Proof of Theorem~\ref{thm:main}]
 We divide the proof into three parts: compactness (with respect to the convergence in measure, by means of approximation through piecewise infinitesimal rigid motions), lower semicontinuity, and closure (in $GSBD$).

\par
\medskip
\paragraph{\bf Compactness.}

For every $k\in \N$ 
and  $z\in (2 \km) \Z^n $ we consider the cubes of center $z$
\begin{equation*}
\begin{split}
q_{k,z}:=z+(-\km,\km)^n.
\end{split}
\end{equation*}
Then $\Omega_k :=\Omega\setminus \bigcup_{q_{k,z}\not\subset\Omega}\overline{q_{k,z}}$ is essentially the union of the cubes which are contained in $\Omega$.

 We apply Proposition~\ref{prop:3CCF16}
with $p=1$ 
in any $q_{k,z}\subset\Omega$, so for $r=k^{-1}$. %
 Then there exist
 sets $\omega_{k,z}^h\subset q_{k,z}$ with 
\begin{equation}\label{1005171230}
\mathcal{L}^n(\omega_{k,z}^h)\leq c k^{-1} \hn(J_{u_h}\cap q_{k,z}) 
\end{equation}
 and affine functions $a_{k,z}^h\colon \Rn \to \Rn$, with $e(a_{k,z}^h)=0$, such that
\begin{equation}\label{prop3iCCF16applicata}
\int\limits_{q_{k,z}\setminus \omega_{k,z}^h}|u_h-a_{k,z}^h| \dx\leq c\,k^{-1}\int\limits_{q_{k,z}}|e(u_h)|\dx\,.
\end{equation}

The functions $(a_{k,z}^h)_{h\ge 1}$ belong to the finite dimensional space of affine functions.  For any sequence of the $i$-th component $(a_{k,z}^h\cdot e_i)_h$, $i=1,\dots,n$,
we have the following cases: 
\begin{itemize}
\item it is bounded, and then converges uniformly (up to a subsequence) to an affine function;
\item it is unbounded, and then one of the two alternative possibilites below occurs:
\begin{itemize}
\item it converges globally,
up to a subsequence, to $+\infty$ or $-\infty$;
\item there is a hyperplane $\{x\cdot\nu = t\}$ ($\nu\in \R^n$, $t\in\R$)
and a subsequence such that $a_{k,z}^h(x)\cdot e_i \to +\infty$ if
$x\cdot\nu>t$ and 
$a_{k,z}^h(x)\cdot e_i \to -\infty$ if $x\cdot\nu<t$.
\end{itemize}
 (To see this, consider the bounded sequence $\frac{a_{k,z}^h\cdot e_i}{\|a_{k,z}^h \cdot e_i\|}$, for any norm $\|\cdot\|$ on the space of affine functions, which has converging subsequences.)
\end{itemize} 


Let $\tau$ denote the function $\tanh$ (or any smooth, $1$-Lipschitz 
increasing function from $-1$ to $1$  with $\tau(0)=0$ ). As a consequence we obtain
that, up to a subsequence, the function 
\[
a^h_k(x):=\sum_{q_{z,k}\subset\Omega} a_{k,z}^h(x)\,\chi_{q_{k,z}}(x)
\]
is such that $\big(\tau(a^h_k \cdot e_i)\big)_h$ converges to some function 
in $L^1(\Omega_k)$, for any $i=1,\dots,n$.  Indeed, we have
\[
\tau(a^h_k \cdot e_i)(x)=\sum_{q_{z,k}\subset\Omega} \tau(a^h_{k,z}\cdot e_i)(x) \,\chi_{q_{k,z}}(x)\,,
\]
and in any cube $q_{k,z}$ the sequence $\big(\tau(a^h_{k,z}\cdot e_i)\big)_h$ converges uniformly either to a function valued in $(-1,1)$, if $(a^h_{k,z}\cdot e_i)_h$ is bounded, or to a function with values $-1$ and 1, attained where the limit of $(a^h_{k,z}\cdot e_i)_h$ is $+\infty$ or $-\infty$, respectively (notice that at this stage $k$ is fixed).

Clearly the subsequence could be extracted from a previous subsequence
built at the stage $k-1$, hence by a diagonal argument, we may assume
that for any $k$, $(\tau(a^h_k\cdot e_i))_h$ converges for all $i=1,\dots n$,
in $L^1(\Omega_k)$. 

We have that for each $i=1,\dots,n$, $k\ge 1$, and $l,m\ge 1$,
\begin{multline} \label{eq:split4}
\int\limits_\Omega |\tau(u_m\cdot e_i)-\tau(u_l\cdot e_i)|\dx
\le 2|\Omega\setminus \Omega_k|
+ \int\limits_{\Omega_k} |\tau(u_m\cdot e_i) - \tau(a^m_k\cdot e_i)| \dx
\\+ \int\limits_{\Omega_k} |\tau(a^m_k\cdot e_i) - \tau(a^l_k\cdot e_i)| \dx
+ \int\limits_{\Omega_k} |\tau(u_l\cdot e_i) - \tau(a^l_k\cdot e_i)| \dx.
\end{multline}
By construction,
\[
\lim_{l,m\to+\infty} 
 \int\limits_{\Omega_k} |\tau(a^m_k\cdot e_i) - \tau(a^l_k\cdot e_i)| \dx = 0.
\]
On the other hand,
\begin{align*}
 \int\limits_{\Omega_k} |\tau(u_m\cdot e_i) - \tau(a^m_k\cdot e_i)| \dx
& =\sum_{q_{k,z}\subset\Omega} \int\limits_{q_{k,z}} |\tau(u_m\cdot e_i) - \tau(a^m_{k,z}\cdot e_i)| \dx \\
& \le \sum_{q_{k,z}\subset\Omega} \bigg(
2|\omega_{k,z}^m| + \hspace{-1em}
\int\limits_{q_{k,z}\sm \omega_{k,z}^m} \hspace{-1em}|u_m- a^m_{k,z}| \dx\bigg) \\
& \le  \frac{2c}{k}\bigg(\hn (J_{u_m})  + \int\limits_{\Omega_k} |e(u_m)|\dx\bigg)
\le \frac{C}{k}.
\end{align*}
Using that $|\Omega\setminus \Omega_k|\to 0$ as $k\to\infty$,
we deduce from~\eqref{eq:split4} that $(\tau(u_h\cdot e_i))_h$ is
a Cauchy sequence (for each $i$) and therefore converges in $L^1(\Omega)$
to some limit which we denote $\tilde \tau_i$. Up to a further subsequence,
we may assume that the convergence occurs almost everywhere and, 
by \eqref{eq:equiint} and \eqref{eq:boundGSBD}, that $\big(e(u_h)\big)_h$ converges weakly in $L^1(\Omega; \Mnn)$. 
 This determines the (sub)sequence $(u_h)_h$ for which we are going to prove the result, fixed from now on. 
First notice that the set $A$ defined in \eqref{eq:defA} (in correspondence to the subsequence) is such that $(u_h)_h$ converges pointwise $\mathcal{L}^n$-a.e.\ in $\Omega \sm A$ to a function with finite values (that is in $\Rn$).

We define $\bar{u}\colon \Omega \to (\widetilde{\R})^n$ and $u\colon \Omega\to \Rn$ such that 
\begin{equation}\label{3112171325}
\begin{split}
\bar{u}:=(\tilde{u}^1,\dots, \tilde{u}^n)\,,\quad \text{ where } \tilde{u}^i=\tau^{-1}(\tilde{\tau}_i)\,;\qquad\qquad u:=\bar{u}\,\chi_{\Omega\sm A}\,,
\end{split}
\end{equation}
with the convention that $\tau^{-1} (\pm 1)= \pm \infty$.

The set $A$, which coincides with  $\{x\in \Omega\colon\, \tilde{u}^i(x) \in \{-\infty, +\infty\} \text{ for some }i\in\{1,\dots,n\}\}$,  is measurable, since $\tilde{u}^i(x) \in \R$ if and only if $|\tau(\tilde{u}^i)|<1$ and the functions $\tilde{\tau}_i \colon \Omega\to [-1,1]$ are measurable. 
Since $(u_h)_h$ converges pointwise $\mathcal{L}^n$-a.e.\ in $\Omega \sm A$ to $u$ 
we have that for every $\xi \in \Sn$
\begin{equation}\label{3112171733}
u_h \cdot \xi \rightarrow u\cdot \xi \quad \text{ $\mathcal{L}^n$-a.e.\ in $\Omega \sm A$ }\,.
\end{equation}
Notice that we have not extracted further subsequences depending on $\xi$, and that the limit function $u$ (equal to $\bar{u}$ since we are in $\Omega\sm A$) does not depend on $\xi$.
Eventually, by Lemma~\ref{le:referee2} we have that for $\hn$-a.e.\ $\xi \in \Sn$
\begin{equation}\label{3112171741}
|u_h \cdot \xi| \rightarrow +\infty \quad \mathcal{L}^n\text{-a.e.\ in } A\,.
\end{equation}
\par
\medskip
\paragraph{\bf Lower semicontinuity.}
 Here we prove first \eqref{eq:sciSalto}, which is specific of our approach due to the description of $A$, and then \eqref{eq:convGradSym}, which follows the lines of \cite[Theorem~1.1]{BelCosDM98}.

As in \cite[Theorem~1.1]{BelCosDM98} (see also \cite[Theorem~11.3]{DM13}), we introduce
\begin{equation}\label{0101182132}
\mathrm{I}\xy(u_h):=\int\limits_\Oxy \phi\big(|(\dot{u}_h)\xy|\big)\,\mathrm{d}t\,,
\end{equation}
where 
$(\dot{u}_h)\xy$ is the density of the absolutely continuous part of $\mathrm{D}(\widehat{u}_h)\xy$, the distributional derivative of $(\widehat{u}_h)\xy$ ($(\widehat{u}_h)\xy\in SBV_{\mathrm{loc}}(\Oxy)$ for every $\xi\in \Sn$ and for $\hn$-a.e.\ $y\in \Pi^\xi$, since $u_h\in GSBD(\Omega)$).
Thus for any $\xi \in \Sn$ it holds that
\begin{equation}\label{0101182137}
\int \limits_{\Pi^\xi} \mathrm{I}\xy(u_h) \,\mathrm{d}\hn(y)=\int \limits_\Omega \phi\big(|e(u_h)(x)\xi \cdot \xi  |\big) \leq \int \limits_\Omega \phi\big(|e(u_h)| \big)\dx \leq M\,,
\end{equation}
by Fubini-Tonelli's theorem and \eqref{eq:boundGSBD}, recalling that $\phi$ is non-decreasing.
Moreover, since $u_h \in GSBD(\Omega)$, $\mathrm{D}_\xi\big(\tau(u_h\cdot \xi)\big) \in \mathcal{M}_b^+(\Omega)$
for every $\xi \in \Sn$ and
\begin{equation}\label{0201181304}
\int \limits_{\Pi^\xi} |\mathrm{D}\big(\tau(u_h\cdot \xi)\xy\big)| (\Oxy) \,\mathrm{d}\hn(y)
=|\mathrm{D}_\xi\big(\tau(u_h\cdot \xi)\big)|(\Omega)\leq  M\,,
\end{equation}
by \eqref{0201181308}
and \eqref{eq:boundGSBD}.
We denote
\begin{equation}\label{0201181313}
\mathrm{II}\xy(u_h):=|\mathrm{D}\big(\tau(u_h\cdot \xi)\xy\big)| (\Oxy)\,.
\end{equation}
Let $(u_k)_k=(u_{h_k})_k$ be a subsequence of $(u_h)_h$ such that
\begin{equation}\label{0101182244}
\lim_{k\to \infty} \hn(J_{u_k})=\liminf_{h\to \infty} \hn(J_{u_h})<+\infty\,,
\end{equation}
so that, 
by \eqref{0101182248}, \eqref{0101182137}, 
and Fatou's lemma, we have that for $\hn$-a.e.\ $\xi \in \Sn$
\begin{equation}\label{0101182302}
\liminf_{k\to\infty} \int\limits_{\Pi^\xi}\Big[ \ho\big(J_{(\widehat{u}_k)\xy}\big) + \varepsilon \big( \mathrm{I}\xy(u_k)  + \mathrm{II}\xy(u_k) \big) \Big]\,\mathrm{d}\hn(y) <+\infty\,,
\end{equation}
for a fixed $\varepsilon\in (0,1)$.
Let us fix $\xi \in \Sn$ such that \eqref{3112171741} and \eqref{0101182302} hold. Then there is a subsequence $(u_m)_m=(u_{k_m})_m$ of $(u_k)_k$, depending on $\varepsilon$ and $\xi$, such that 
\begin{equation}\label{0101182324}
\begin{split}
\lim_{m\to\infty} &\int\limits_{\Pi^\xi}\Big[ \ho\big(J_{(\widehat{u}_m)\xy}\big) + \varepsilon \big(\mathrm{I}\xy(u_m)+ \mathrm{II}\xy(u_m) \big)  \Big]\,\mathrm{d}\hn(y)\\
&=\liminf_{k\to\infty} \int\limits_{\Pi^\xi}\Big[ \ho\big(J_{(\widehat{u}_k)\xy}\big) + \varepsilon \big( \mathrm{I}\xy(u_k) + \mathrm{II}\xy(u_k) \big) \Big]\,\mathrm{d}\hn(y)\,.
\end{split}
\end{equation}
Therefore, by \eqref{0101182324},  \eqref{3112171733}, and \eqref{3112171741}, employing  Fatou's lemma, we have that for $\hn$-a.e.\ $y\in \Pi^\xi$
\begin{equation}\label{0101182328}
\liminf_{m\to \infty} \Big[ \ho\big(J_{(\widehat{u}_m)\xy}\big) + \varepsilon \big(\mathrm{I}\xy(u_m) + \mathrm{II}\xy(u_m) \big) \Big] < +\infty\,,
\end{equation}
\begin{equation}\label{0101182331}
(\widehat{u}_m)\xy \rightarrow \widehat{u}\xy \quad \mathcal{L}^1\text{-a.e.\ in }\Dxy\,\qquad |(\widehat{u}_m)\xy| \rightarrow \infty\,, \quad\mathcal{L}^1\text{-a.e.\ in }\Axy\,,
\end{equation}
and
\begin{equation}\label{0201181219}
\tau(u_m\cdot \xi)\xy\rightarrow \tilde{\tau}\xy \quad\text{in }L^1(\Oxy)\,,
\end{equation}
for a suitable $\tilde{\tau}\xy \in L^1(\Oxy)$. Now we employ \eqref{3112171733}, \eqref{3112171741}, and \eqref{0101182331}, \eqref{0201181219} to get
\begin{equation}\label{0201181338}
\begin{cases}
\hspace{-1em}&\tilde{\tau}\xy=\tau(u\cdot \xi)\xy \quad\mathcal{L}^1\text{-a.e.\ in } \Dxy\\
\hspace{-1em}&  |\tilde{\tau}\xy|=1 \quad \mathcal{L}^1\text{-a.e.\ in } \Axy\,.
\end{cases}
\end{equation}

Fixed $y\in \Pi^\xi$ satisfying \eqref{0101182328} and \eqref{0101182331}, and such that 
$(\widehat{u}_m)\xy\in SBV_{\mathrm{loc}}(\Oxy)$
 for every $m$, we extract a subsequence $(u_j)_j=(u_{m_j})_j$ from $(u_m)_m$, depending also on $y$, for which
\begin{equation}\label{0101182346}
\lim_{j\to \infty} \Big[ \ho\big(J_{(\widehat{u}_j)\xy}\big) + \varepsilon \big(\mathrm{I}\xy(u_j)+\mathrm{II}\xy(u_j) \big) \Big]=\liminf_{m\to \infty} \Big[ \ho\big(J_{(\widehat{u}_m)\xy}\big) + \varepsilon \big( \mathrm{I}\xy(u_m)+\mathrm{II}\xy(u_m) \big)\Big]\,.
\end{equation}
Then by \eqref{0201181219} we have that
\begin{equation}\label{0201181346}
\tau(u_j\cdot \xi)\xy \wstar \tilde{\tau}\xy \quad\text{in }SBV(\Oxy)\,.
\end{equation}


In order to describe the set $A$, we consider its slices $\Axy$ and prove that for $\hn$-a.e.\ $y\in \Pi^\xi$
\begin{equation}\label{1307181827}
\Axy \text{ is a finite union of intervals where $\tilde{\tau}\xy$ has either the value $1$ or $-1$}\,,
\end{equation}
and

\begin{equation}\label{0301181024}
\partial A\xy \subset J_{\tilde{\tau}\xy}\,.
\end{equation}
 Recalling that $|\tilde{\tau}\xy|<1$ in $\Dxy$, by \eqref{0201181338}, the property above states that there is a jump each time one passes from values of $\tilde{\tau}\xy$ with absolute value less than 1 to $\Axy$, that is the set where $|\tilde{\tau}\xy|=1$. In terms of the slices of $u$, one passes from finite to infinite values.

Let us show the claimed properties. Up to considering a subsequence of $(\widehat{u}_j)\xy$, we may assume that for every $j$ 
\[
\ho\big(J_{(\widehat{u}_j)\xy}\big)=N_y \in \N\,,
\]
namely there is a fixed
number $N_y$ of jump points. These points tend to $M_y \leq N_y$ points 
\[t_1,\dots ,t_{M_y}\,.\]
Then (recall that $\mathrm{II}\xy(u_j)$ is equibounded in $j$ by \eqref{0101182346}) 
for 
every $l=1,\dots,M_y-1$
\begin{equation*}
\tau(u_j\cdot \xi)\xy  \weak \tilde{\tau}\xy \quad\text{ in } W^{1,1}_{\mathrm{loc}}(t_l, t_{l+1})\,,
\end{equation*}
and the convergence above is locally uniform (for the precise representatives).  Moreover, since $\mathrm{I}\xy(u_j)$ is equibounded again by \eqref{0101182346},
it follows that $x \mapsto (\widehat{u}_j)\xy(x) - (\widehat{u}_j)\xy(\ol x)$ is locally uniformly bounded in $(t_l, t_{l+1})$, for any choice of $\ol x \in (t_l, t_{l+1})$ (by the Fundamental Theorem of Calculus).
Hence for any $l$ 
we have two alternative possibilities: 
\begin{itemize}
\item there is $\ol x \in (t_l, t_{l+1})$ such that \[\lim_{j\to \infty} (\widehat{u}_j)\xy(\ol x)= \widehat{u}\xy(\ol x) \in \R\] (that is $\ol x \notin \Axy$), and then $(\widehat{u}_j)\xy$ converge locally uniformly in $(t_l, t_{l+1})$ to $\widehat{u}\xy$;
\item for $\mathcal{L}^1$-a.e.\  $x \in (t_l, t_{l+1})$,
\[ \lim_{j\to \infty} |(\widehat{u}_j)\xy(x)| = \infty \,,\]
that is $(t_l, t_{l+1}) \subset \Axy$.
\end{itemize}  
Therefore any $(t_l, t_{l+1})$ is contained either in $\Dxy$ or in $\Axy$. Moreover, in the first case we have that $\widehat{u}\xy \in W^{1,1}(t_l, t_{l+1}) \subset L^\infty(t_l, t_{l+1})$. In particular, in this case there is $\eta \in (0,1)$ such that 
\begin{equation}\label{1307182121}
\tilde{\tau}\xy(t_l, t_{l+1})\subset [-1+\eta, 1-\eta]\,.
\end{equation}
This implies \eqref{1307181827} and \eqref{0301181024}.

By \eqref{0101182346}, \eqref{0201181346},  \eqref{0301181024}, and since the jump sets of $\tau(u_j\cdot \xi)\xy$ and $(\widehat{u}_j)\xy$ coincide,
we deduce,  by lower semicontinuity for $SBV$ functions defined in one-dimensional domains
(see \cite[Proposition~4.2]{Amb89UMI}),  that
\begin{equation}\label{0201181710}
\begin{split}
\ho\big(J_{\widehat{u}\xy} \cap \Dxy\big) + \ho\big(\partial A\xy\big) &\leq \ho(J_{\tilde{\tau}\xy})
\\&\leq \liminf_{m\to \infty} \Big[ \ho\big(J_{(\widehat{u}_m)\xy}\big) + \varepsilon \big( \mathrm{I}\xy(u_m)+\mathrm{II}\xy(u_m) \big)\Big]\,.
\end{split}
\end{equation}
We now integrate over $y\in\Pi^\xi$ and use Fatou's lemma with \eqref{0101182324} to get
\begin{equation}\label{0201181712}
\begin{split}
\int \limits_{\Pi^\xi} &\Big[\ho\big(J_{\widehat{u}\xy} \cap \Dxy\big) + \ho\big(\partial A\xy\big)\Big] \,\mathrm{d}\hn(y) \\&\leq \liminf_{k\to\infty} \int\limits_{\Pi^\xi}\Big[ \ho\big(J_{(\widehat{u}_k)\xy}\big) + \varepsilon \big( \mathrm{I}\xy(u_k) + \mathrm{II}\xy(u_k) \big) \Big]\,\mathrm{d}\hn(y)
\end{split}
\end{equation}
for $\hn$-a.e.\ $\xi \in \Sn$.
 In particular we deduce that $A$ has finite perimeter (\textit{cf.}~\cite[Remark~3.104]{AFP}). 
%
%

%
%

We integrate \eqref{0201181712} over $\xi \in \Sn$; by \eqref{0101182248}, \eqref{0101182137}, \eqref{0201181304}, and \eqref{0101182244} we get
\begin{equation}\label{0201181718}
\hn(J_u \cap (\Omega\setminus A)) + \hn(\partial^*A)\leq C\,M \varepsilon + \liminf_{h\to \infty} \hn(J_{u_h})\,,
\end{equation}
for a universal constant $C$. By the arbitrariness of $\varepsilon$ and the definition of $u$ we obtain \eqref{eq:sciSalto}. 
\medskip
\\
 The property \eqref{eq:convGradSym} follows by an adaptation of the arguments in \cite[Theorem~1.1]{BelCosDM98} as in \cite[Theorem~11.3]{DM13}
(which employ Ambrosio-Dal Maso's~\cite[Prop.~4.4]{Amb89UMI}). We report the proof for the reader's convenience.

Fatou's lemma and \eqref{0101182248} give that for $\hn$-a.e.\ $\xi \in \Sn$
\begin{equation}\label{1407180820}
\liminf_{h\to \infty} \int \limits_{\Pi^\xi} \ho(J_{(\widehat{u}_h)\xy}\cap \Omega\xy)\,\mathrm{d}\hn(y) < + \infty\,.
\end{equation}
In particular there is a basis $\{\xi_1, \dots, \xi_n\}$ of $\Rn$ such that this holds for every $\xi$ of the form $\xi=\xi_i+\xi_j$, $i,j=1,\dots, n$. We fix a $\xi$ of this type, and we find a subsequence $(u_k)_k= (u_{h_k})_k$ of $(u_h)_h$, depending on $\xi$, such that 
\begin{equation}\label{1307182046}
\lim_{k\to \infty} \int \limits_{\Pi^\xi} \ho(J_{(\widehat{u}_k)\xy}\cap \Omega\xy)\,\mathrm{d}\hn(y) =\liminf_{h\to \infty} \int \limits_{\Pi^\xi} \ho(J_{(\widehat{u}_h)\xy}\cap \Omega\xy)\,\mathrm{d}\hn(y) \,.
\end{equation}
For a given $w \in L^1(\Omega)$ let (recall \eqref{0101182132} for the definition of $(\dot{u}_k)\xy$)
\begin{equation*}
\mathrm{III}\xy(u_k, w):= \int \limits_{\Dxy} \big| (\dot{u}_k)\xy - w\xy \big| \, \mathrm{d}t\,.
\end{equation*}
By \eqref{3105171927}, \eqref{eq:boundGSBD} (the sequence $(u_h)_h$ has been fixed before \eqref{3112171325}), and Fubini-Tonelli's theorem there is a subsequence $(u_l)_l = (u_{k_l})_l$ of $(u_k)_k$ such that
\begin{equation}\label{1307182024}
\lim_{l\to \infty} \int \limits_{\Pi^\xi} \mathrm{III}\xy(u_l, w) \dh(y) = \liminf_{k\to \infty} \int \limits_{\Omega \sm A} \big| e(u_k) \xi \cdot \xi - w \big| \dx < + \infty\,.
\end{equation}
Let us also fix $\varepsilon\in (0,1)$. Again by Fubini-Tonelli's theorem, there is a subsequence $(u_m)_m=(u_{l_m})_m$ of $(u_l)_l$, depending on $\xi$, $w$, $\varepsilon$, such that \eqref{0101182331} holds for $\hn$-a.e.\ $y \in \Pi^\xi$ and
\begin{equation}\label{1307182038}
\begin{split}
\lim_{m\to\infty} &\int\limits_{\Pi^\xi} \mathrm{III}\xy (u_m, w) +\varepsilon\big[ \ho\big(J_{(\widehat{u}_m)\xy}\big) + \mathrm{I}\xy(u_m)+ \mathrm{II}\xy(u_m)   \big]\,\mathrm{d}\hn(y)\\
&=\liminf_{l\to\infty} \int\limits_{\Pi^\xi} \mathrm{III}\xy (u_l, w) + \varepsilon \big[ \ho\big(J_{(\widehat{u}_l)\xy}\big) + \mathrm{I}\xy(u_l) + \mathrm{II}\xy(u_l) \big]\,\mathrm{d}\hn(y)\,.
\end{split}
\end{equation}
By \eqref{0101182137}, \eqref{0101182244}, \eqref{1307182046},  \eqref{1307182024}, and Fatou's lemma, for $\hn$-a.e.\ $y\in \Pi^\xi$
\begin{equation}\label{1307182051}
\liminf_{m\to \infty} \Big[\mathrm{III}\xy (u_m, w) +\varepsilon\big[ \ho\big(J_{(\widehat{u}_m)\xy}\big) + \mathrm{I}\xy(u_m)+ \mathrm{II}\xy(u_m) \big] \Big] < +\infty\,.
\end{equation}
Let $y \in \Pi^\xi$ be such that \eqref{0101182331} and \eqref{1307182051} hold, and $(\widehat{u}_m)\xy\in SBV_{\mathrm{loc}}(\Oxy)$ for every $m$. We find a subsequence $(u_j)_j=(u_{m_j})_j$ of $(u_m)_m$, depending also on $y$, for which
\begin{equation}\label{1307182100}
\begin{split}
& \lim_{j\to \infty} \Big[\mathrm{III}\xy (u_j, w) +\varepsilon\big[ \ho\big(J_{(\widehat{u}_j)\xy}\big) + \mathrm{I}\xy(u_j)+ \mathrm{II}\xy(u_j) \big] \Big] \\& \hspace{1em}= \liminf_{m\to \infty} \Big[\mathrm{III}\xy (u_m, w) +\varepsilon\big[ \ho\big(J_{(\widehat{u}_m)\xy}\big) + \mathrm{I}\xy(u_m)+ \mathrm{II}\xy(u_m) \big] \Big] \,.
\end{split}
\end{equation}
Recalling the form of $\Axy$ (and \eqref{1307182121}) we deduce that $(\widehat{u}_j)\xy$ converge to $\widehat{u}\xy$ weakly$^*$ in $BV(I)$ for any $I$ compactly contained in $\Dxy$, and then $(\dot{u}_j)\xy \weak \dot{u}\xy$ in $L^1\big(\Dxy\big)$, by \eqref{eq:equiint}. Together with \eqref{1307182100} this gives
\begin{equation*}
\begin{split}
\mathrm{III}\xy(u, w)  \leq \liminf_{j\to \infty} \mathrm{III}\xy(u_j, w) \leq  \liminf_{m\to \infty} \Big[\mathrm{III}\xy (u_m, w) +\varepsilon\big[ \ho\big(J_{(\widehat{u}_m)\xy}\big) + \mathrm{I}\xy(u_m)+ \mathrm{II}\xy(u_m) \big] \Big]\,. 
\end{split}
\end{equation*}
Integrating with respect to $y\in \Pi^\xi$, by Fatou's lemma and \eqref{1307182024}, \eqref{1307182038} plus the bounds \eqref{0101182137}, \eqref{0201181304}, \eqref{0101182302}, we get 
\begin{equation*}
\int \limits_{\Omega\sm A} \big| e(u) \xi \cdot \xi - w  \big| \leq \liminf_{k\to \infty} \int \limits_{\Omega \sm A} \big| e(u_k) \xi \cdot \xi - w \big| \dx + \varepsilon\Big( C\, M + \liminf_{h\to \infty} \int \limits_{\Pi^\xi} \ho(J_{(\widehat{u}_h)\xy}\cap \Omega\xy)\,\mathrm{d}\hn(y)\Big)\,.
\end{equation*}
By \eqref{1407180820} and the arbitrariness of $\varepsilon$, we deduce that for all $w\in L^1(\Omega)$,
\begin{equation*}
\int \limits_{\Omega\sm A} \big| e(u) \xi \cdot \xi - w  \big| \leq \liminf_{k\to \infty} \int \limits_{\Omega \sm A} \big| e(u_k) \xi \cdot \xi - w \big| \dx \,.
\end{equation*}
Since the sequence $\big(e(u_h)\big)_h$ weakly converges in $L^1(\Omega \sm A; \Mnn)$, then 
 \cite[Proposition~4.4]{Amb89UMI} gives
\begin{equation*}\label{1307182156}
e(u_h) \xi \cdot \xi \weak e(u) \xi \cdot \xi \quad \text{ in } L^1(\Omega \sm A)\,,
\end{equation*}
and by the arbitrariness of 
$\xi=\xi_i + \xi_j$
we deduce \eqref{eq:convGradSym}. 


%
\par
\medskip
\paragraph{\bf Closure.}  We now show that the limit function $u$, defined in \eqref{3112171325}, is in $GSBD(\Omega)$.

 Employing \eqref{0201181308} and recalling \eqref{eq:boundGSBD}, we have that there exist $\lambda_{u_h}\in \mathcal{M}_b^+(\Omega)$ such that 
\[
\lambda_{u_h}(\Omega) \leq  M \,,
\]
and for every $\xi \in \Sn$ and every Borel set $B\subset \Omega$
\[
|\mathrm{D}_\xi\big(\tau(u_h\cdot \xi)\big)|(B)\leq \lambda_{u_h}(B)\,.
\]

Let $\tilde{\lambda} \in \mathcal{M}_b^+(\Omega)$ be a weak$^*$ limit of a subsequence of $(\lambda_{u_h})_h$, so that $\tilde{\lambda}(\Omega) \leq M$.
%
Notice that 
\begin{equation}\label{0401181312}
\mathrm{D}_\xi\tau(u \cdot \xi) \in \mathcal{M}_b(\Omega) \quad\text{for every }\xi \in \Sn
\end{equation}
and 
\begin{equation}\label{0401181316}
|\mathrm{D}_\xi\tau(\tilde{u} \cdot \xi)|(B) \leq \tilde{\lambda}(B)=: \lambda_u(B)
\end{equation}
for every Borel set $B\subset \Omega$, where $\tilde{\lambda}$ has been defined above. This follows by a slicing procedure and the use of Fatou's lemma for every $\xi$, to reconstruct at the end $|\mathrm{D}_\xi(\tau(u\cdot \xi))|(\Omega)$ from $\mathrm{II}\xy(u):=|\mathrm{D}\big(\tau(u\cdot \xi)\xy\big)| (\Oxy)$ (see \eqref{0201181313}), as in \eqref{0201181304}.
The important point here is to get the semicontinuity 
\begin{equation*}
\mathrm{II}\xy(u)\leq \liminf_{j\to \infty} \mathrm{II}\xy(u_j)= \liminf_{j\to \infty}  |\mathrm{D}\big(\tau(u_j\cdot \xi)\xy \big)|(\Oxy)\,,
\end{equation*}
for the slices, which follows from \eqref{0201181346}. Indeed $\mathrm{II}\xy(u) \leq |\mathrm{D}\big(\tilde{\tau}\xy) \big)|(\Oxy)$ because  
 $\tau(u\cdot \xi)\xy=\tilde{\tau}\xy$ in $(\Omega \setminus A)\xy$ by \eqref{0201181338} and 
$\tau(u\cdot \xi)=0$ in $A\xy$, so we employ \eqref{0301181024}. Moreover, it is immediate that $\widehat{u}\xy \in SBV_{\mathrm{loc}}(\Omega\xy)$.
Therefore $u\in GSBD(\Omega)$.
This concludes the proof.
\end{proof}

\section{Existence for minimisers of the Griffith energy}
\label{sec:Gri}
\addtocontents{toc}{\protect\setcounter{tocdepth}{1}}
Employing Theorem~\ref{thm:main}, we deduce in this section the existence of weak solutions to the minimisation problem of the Griffith energy with Dirichlet boundary conditions.
\subsection*{Existence of weak solutions}
Assume $\Omega\subset \Rn$ be an open, bounded
 domain for which \[\dom=\dod\cup \don\cup N\,,\] with $\dod$ and $\don$ relatively open, $\dod \cap \don =\emptyset$, $\mathcal{H}^{n-1}(N)=0$, 
$\dod \neq \emptyset$, and $\partial(\dod)=\partial(\don)$. 
 Let $u_0\in W^{1,p}(\Rn;\Rn)$ and $W\colon \Mnn\to [0,\infty)$ be convex, with $W(0)=0$ and
\begin{equation}\label{eq:equiintW}
W(\xi) \geq \phi(|\xi|) \qquad \text{for }\xi \in \Mnn\,,
\end{equation}
where $\phi$ satisfies \eqref{eq:equiint}.

Let $K_0 \subset \Omega \cup \dod$ be $(n{-}1)$-countably rectifiable with $\hn(K_0)<+\infty$, and consider the minimisation problem:
\begin{equation}\label{2301181120}
\min_{v\in GSBD(\Omega)} \Bigg\{ \int\limits_\Omega W(e(v)) \dx + \hn\big(J_v \cup(\dod \cap \{\mathrm{tr}_\Omega \, v \neq \mathrm{tr}_\Omega\, u_0\}) \setminus K_0\big) 
\Bigg\}\,.
\end{equation}
Notice that, defining 
$\widetilde{\Omega}:=\Omega \cup U$, where $U$ is an open bounded set with $U \cap \dom =\dod$, we can recast the problem as
\begin{equation}\label{2301181148}
\min_{v\in GSBD(\widetilde{\Omega})} \Bigg\{ \int\limits_{\widetilde{\Omega}} W(e(v)) \dx + \hn(J_v \setminus K_0) \colon v=u_0 \text{ in }\widetilde{\Omega}\setminus (\Omega\cup \dod)
\Bigg\}\,.
\end{equation}
Then we have the following existence result.
\begin{theorem}\label{teo:existminGrif}
Problem \eqref{2301181148} admits solutions.
\end{theorem}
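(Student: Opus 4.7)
The plan is to apply the direct method of the calculus of variations, using Theorem~\ref{thm:main} as the compactness tool. Take a minimising sequence $(v_h)_h \subset GSBD(\widetilde{\Omega})$ for the functional in \eqref{2301181148}, so that $v_h = u_0$ on $\widetilde{\Omega}\setminus(\Omega\cup\dod)$ and
\[
\int_{\widetilde{\Omega}} W(e(v_h))\dx + \hn(J_{v_h}\setminus K_0) \leq C < \infty.
\]
Since $u_0\in W^{1,p}(\Rn;\Rn)$, the bulk contribution on the extension region is uniformly controlled, and adding $\hn(K_0) < \infty$ to the surface term, the coercivity \eqref{eq:equiintW} guarantees that $(v_h)_h$ satisfies \eqref{eq:boundGSBD} on $\widetilde{\Omega}$. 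Theorem~\ref{thm:main} then produces, along a subsequence, a set $A\subset \widetilde{\Omega}$ of finite perimeter and $u\in GSBD(\widetilde{\Omega})$ with $u=0$ on $A$, such that $v_h \to u$ $\mathcal{L}^n$-a.e.\ in $\widetilde{\Omega}\setminus A$, $e(v_h) \weak e(u)$ in $L^1(\widetilde{\Omega}\setminus A;\Mnn)$, and $\hn(J_u \cup \partial^*A) \leq \liminf_h \hn(J_{v_h})$.

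Because $v_h \equiv u_0$ is finite on the open set $\widetilde{\Omega}\setminus(\Omega\cup\dod)$, the definition \eqref{eq:defA} of $A$ forces $\mathcal{L}^n(A\setminus(\Omega\cup\dod))=0$, so we may assume $A\subset \Omega\cup\dod$. Define the candidate
\[
\tilde u := u\,\chi_{\widetilde{\Omega}\setminus A}\,,
\]
which still equals $u_0$ on $\widetilde{\Omega}\setminus(\Omega\cup\dod)$, hence complies with the Dirichlet constraint. To check $\tilde u \in GSBD(\widetilde{\Omega})$, observe that for every $\xi\in\Sn$ and $\hn$-a.e.\ $y\in \Pi^\xi$, the slice $\widehat{\tilde u}\xy$ agrees with $\widehat{u}\xy$ on $(\widetilde{\Omega}\setminus A)\xy$ and vanishes on $\Axy$; since $A$ has finite perimeter, $\partial A\xy$ is $\ho$-a.e.\ finite, so $\widehat{\tilde u}\xy \in SBV_{\mathrm{loc}}(\widetilde{\Omega}\xy)$, and condition (b) of Definition~\ref{def:GBD} holds with a measure $\lambda_{\tilde u}$ built from $|e(u)|\mathcal{L}^n$ plus $\hn$ on $J_u\cup \partial^*A$ in the spirit of \eqref{0201181308}.

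It remains to check lower semicontinuity. Convexity of $W$, the weak $L^1$-convergence $e(v_h)\weak e(u)$ on $\widetilde{\Omega}\setminus A$, and the fact that $e(\tilde u)=0$ together with $W(0)=0$ on $A$, yield
\[
\int_{\widetilde{\Omega}} W(e(\tilde u))\dx = \int_{\widetilde{\Omega}\setminus A} W(e(u))\dx \leq \liminf_{h\to\infty} \int_{\widetilde{\Omega}\setminus A} W(e(v_h))\dx \leq \liminf_{h\to\infty} \int_{\widetilde{\Omega}} W(e(v_h))\dx.
\]
For the surface term, the inclusion $J_{\tilde u} \subset \bigl(J_u\cap (\widetilde{\Omega}\setminus A)\bigr)\cup \partial^* A$ (up to $\hn$-null sets) reduces the problem to the localised bound
\[
\hn\bigl((J_u \cup \partial^* A)\setminus K_0\bigr) \leq \liminf_{h\to\infty} \hn(J_{v_h}\setminus K_0)\,.
\]
This is the main obstacle: the statement \eqref{eq:sciSalto} of Theorem~\ref{thm:main} is written globally, so one must rerun the slicing argument culminating in \eqref{0201181712}--\eqref{0201181718} with the Borel set $\widetilde{\Omega}\setminus K_0$ in place of $\widetilde{\Omega}$, invoking the localised form of the representation \eqref{0101182248}. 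Combining this with the bulk estimate shows that $\tilde u$ attains the infimum in \eqref{2301181148}, which is therefore realised by a minimiser.
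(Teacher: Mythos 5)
Your overall strategy (direct method plus Theorem~\ref{thm:main}) is correct, but the execution differs from the paper's in a way that leaves a genuine gap at the point you yourself flag as ``the main obstacle.'' You apply Theorem~\ref{thm:main} on $\widetilde{\Omega}$ and then, correctly, observe that \eqref{eq:sciSalto} gives you only the \emph{global} inequality $\hn(J_u\cup\partial^*A)\leq\liminf_h\hn(J_{v_h})$, whereas the functional counts only $\hn(\cdot\setminus K_0)$. Subtracting $\hn(K_0)$ from both sides is not legitimate, and your claim that one can ``rerun the slicing argument with $B=\widetilde{\Omega}\setminus K_0$'' is plausible but unproven: the chain \eqref{0101182302}--\eqref{0201181718} involves Fatou's lemma in $\xi$ and in $y$, extraction of subsequences at each level, and the one-dimensional $SBV$ lower semicontinuity of Ambrosio --- each step would need to be checked against the restriction to the Borel set $\widetilde{\Omega}\setminus K_0$ (in particular, the slices $(\widetilde{\Omega}\setminus K_0)^\xi_y$ need not be open when $K_0$ is only $(n{-}1)$-rectifiable). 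As stated, this is a missing lemma, not a quoted one.

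The paper avoids the issue entirely by a more economical choice of domain: first assume $K_0$ is \emph{compact}, so that $\widetilde{\Omega}\setminus K_0$ is open, and apply Theorem~\ref{thm:main} there. Then $J_{u_h}$ and $J_u\cup\partial^*A$ are automatically measured away from $K_0$, and \eqref{eq:sciSalto} yields precisely $\hn((J_u\cup\partial^*A)\setminus K_0)\leq\liminf_h\hn(J_{u_h}\setminus K_0)$ with no need for a localised restatement. The non-compact case is then recovered by an elementary inner approximation $\widehat{K_0}\subset K_0$ compact with $\hn(K_0\setminus\widehat{K_0})<\varepsilon$, using $J_{u_h}\setminus\widehat{K_0}\subset(J_{u_h}\setminus K_0)\cup(K_0\setminus\widehat{K_0})$ and letting $\varepsilon\to 0$. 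To make your argument rigorous, either prove the localised version of \eqref{eq:sciSalto} in full, or adopt this change of domain plus the approximation of $K_0$.

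Two smaller remarks. First, Theorem~\ref{thm:main} already produces $u\in GSBD$ with $u=0$ on $A$, so your definition $\tilde u:=u\,\chi_{\widetilde{\Omega}\setminus A}$ gives back $\tilde u=u$; the subsequent verification that $\tilde u\in GSBD(\widetilde{\Omega})$ re-proves what the theorem already asserts. Second, your bulk estimate quietly uses weak $L^1$-lower semicontinuity of the convex integrand $\int W(e(\cdot))$, which is correct but worth stating explicitly since $W$ is only assumed convex and superlinear, not continuous with growth bounds.
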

\begin{proof}
Let $u_h\in GSBD(\widetilde{\Omega})$ with $u_h=u_0 \text{ in }\widetilde{\Omega}\setminus (\Omega\cup \dod)$ be the elements of a minimising sequence for \eqref{2301181148}. Observe that the infimum of problem \eqref{2301181148} is finite, since the functional is nonnegative and $u_0$ is an admissible competitor.

Assume for the moment that $K_0$ is compact.
By \eqref{eq:equiintW} the functions $u_h$ satisfy the hypotheses of Theorem~\ref{thm:main} with $\Omega=\widetilde{\Omega}\setminus K_0$, so that
 there exist $A \subset \widetilde{\Omega}\setminus K_0$ with finite perimeter and a measurable function $u \colon \widetilde{\Omega}\setminus K_0 \to \Rn$ with $u=0$ in $A$
 such that (up to a subsequence)
 \begin{equation}\label{0702181209}
 A=\{x\in \widetilde{\Omega}\setminus K_0 \colon |u_h(x)|\to\infty\},\qquad u_h\to u \quad \mathcal{L}^n\text{-a.e.\ in }\Omega\sm (K_0 \cup A)
 \end{equation}
  (since $\mathcal{L}^n(K_0)=0$ we could consider just $\widetilde{\Omega}$ above, but we keep $\widetilde{\Omega}\sm K_0$ to indicate the set where we apply Theorem~\ref{thm:main}) and 
 \begin{equation*}
 \int\limits_{\widetilde{\Omega}} W(e(u)) \dx + \hn(J_{u} \setminus K_0)\leq \liminf_{h\to \infty} \int\limits_{\widetilde{\Omega}} W(e(u_h)) \dx + \hn(J_{u_h} \setminus K_0)\,,
 \end{equation*}
 Moreover, by \eqref{0702181209} and the admissibility condition for $u_h$ it follows that
$u=u_0$ in $\widetilde{\Omega}\setminus (\Omega\cup \dod)$, and in particular $A$ does not intersect $\big(\widetilde{\Omega}\setminus (\Omega\cup \dod)\big)$. Since $W$ is convex, we have lower semicontinuity for the bulk term, and $u$ solves \eqref{2301181148}. This proves the theorem if $K_0$ is compact.  Notice that this holds for any other function $v$ which coincides with $u$ in $\Omega\sm A$ 
and is set equal to any fixed infinitesimal rigid motion in $A$, since the energy of $v$ in $A$ is null, and then by \eqref{1407180906} the Griffith energy of $v$ is less than the the $\liminf$ of the energies of $u_h$.



If $K_0$ is not compact, for any $\varepsilon>0$ consider $\widehat{K_0}\subset K_0$ with $\hn(K_0\sm \widehat{K_0})<\varepsilon$. Then, arguing as above for the open set $\widetilde{\Omega}\sm \widehat{K_0} \supset \widetilde{\Omega}\sm K_0$, we get still 
\[
\int\limits_{\widetilde{\Omega}} W(e(u)) \dx \leq \liminf_{h\to \infty} \int\limits_{\widetilde{\Omega}} W(e(u_h)) \dx \,,
\]
and
\begin{equation*}
\begin{split}
\hn(J_{u}\sm K_0)&\leq \hn(J_{u}\sm \widehat{K_0}) \leq \liminf_{h\to\infty} \hn(J_{u_h}\sm \widehat{K_0}) \\
&\leq \liminf_{h\to \infty} \hn(J_{u_h}\sm K_0) + \hn(K_0\sm \widehat{K_0})< \liminf_{h\to \infty} \hn(J_{u_h}\sm K_0) + \varepsilon\,,
\end{split}
\end{equation*}
since $J_{u}\sm K_0 \subset J_{u}\sm \widehat{K_0}$ and $J_{u_h}\sm \widehat{K_0} \subset (J_{u_h}\sm K_0) \cup (K_0\sm \widehat{K_0})$ (\textit{cf.}~also \cite[Theorem~2.5]{FriSol16}). We conclude since $\varepsilon>0$ is arbitrary.
\end{proof}

\begin{remark}\label{0702182040}
Since, as observed in the proof, a family of minimisers is obtained by adding any fixed infinitesimal rigid motion in $A$ to a given minimiser, 
we conclude that 
\[
\hn(\partial^*A\cap \{\mathrm{tr}\,u=a\})=0
\]
 for every
infinitesimal rigid motion $a$ ($a(x)=\mathbf{a}\cdot x+b$, $\mathbf{a}+\mathbf{a}^T=0$),
 where $\mathrm{tr}$ denotes here the trace of $u$ on $\partial^*A$ (which is $(n{-}1)$-countably rectifiable) from $\Omega\sm A$.
\end{remark}
\subsection*{Existence of strong solutions}
In recent works~\cite{CFI17DCL,ChaConIur17}, Chambolle, Conti, Focardi, and Iurlano have shown more regularity for the 
possible minimisers of
\eqref{2301181148} (or \eqref{2301181120}) 
if $W(\xi)=\mathbb{C}e(\xi)\colon e(\xi)$ (in \cite{ChaConIur17}), or $n=2$ and
\begin{equation}\label{2501182258}
W(\xi)=f_\mu(\xi):=\frac{1}{p}\Big((\mathbb{C}\xi\colon \xi + \mu)^{p/2}-\mu^{p/2}\Big)
\end{equation} 
(in \cite{CFI17DCL}), requiring that $\mathbb{C}\colon \Mnn \to \Mnn$ is a symmetric linear map with
\begin{equation*}
\mathbb{C}(\xi-\xi^T)=0 \quad\text{and}\quad \mathbb{C}\xi\cdot \xi\geq c_0 |\xi+\xi^T|^2 \qquad \text{for all }\xi\in \Mnn\,.
\end{equation*}
More precisely, the essential closedness of the jump set is established:
\begin{theorem}\label{teo:CCIF}
Let $K_0 \subset \Omega \cup \dod$ closed, with $\hn(K_0)< + \infty$, and $u\in GSBD^2(\Omega\setminus K_0)$ (or $u\in GSBD^p(\Omega\sm K_0)$, if $\Omega\subset \R^2$) be a minimiser of 
\begin{equation}\label{1407180937}
\int\limits_\Omega \mathbb{C} e(v)\colon e(v) \dx + \hn\big(J_v \cup(\dod \cap \{\mathrm{tr}_\Omega \, v \neq \mathrm{tr}_\Omega\, u_0\}) \setminus K_0\big)
\end{equation} 
(a minimiser of \eqref{2301181148} with \eqref{2501182258}, respectively). Then 
\begin{equation*}
\hn\big((\Omega \setminus K_0)\cap (\overline{J}_u \sm J_u)\big)=0\,,\qquad u\in C^1\big(\Omega\sm(K_0 \cup \overline{J}_u)\big)\,.
\end{equation*}
\end{theorem}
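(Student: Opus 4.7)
The proof of Theorem~\ref{teo:CCIF} follows the De Giorgi-Carriero-Leaci blueprint for the Mumford-Shah functional, transplanted into the vectorial linearised elasticity setting. The plan is to reduce essential closedness of $J_u$ to a uniform density lower bound
\[
\hn(J_u\cap B_\rho(x))\geq \theta_0\,\rho^{n-1}
\]
valid for every $x\in \overline{J}_u\cap (\Omega\sm K_0)$ and every sufficiently small $\rho$ with $\overline{B_\rho(x)}\subset \Omega\sm K_0$. Granted this inequality and $\hn(J_u)<\infty$, a standard density lemma for rectifiable sets (as in \cite{AFP}) yields $\hn\bigl((\Omega\sm K_0)\cap (\overline{J}_u\sm J_u)\bigr)=0$. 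The $C^1$ conclusion on $\Omega\sm(K_0\cup\overline{J}_u)$ then follows because on this open set $u$ locally minimises a strongly elliptic elasticity integral among competitors sharing its boundary values on small balls; Korn's inequality together with Schauder-type regularity for the associated Euler-Lagrange system provides the interior $C^1$ regularity.

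The density lower bound is proved by contradiction and blow-up. Suppose there exist $x_0\in \overline{J}_u$ and $\rho_h\to 0$ with $\hn(J_u\cap B_{\rho_h}(x_0))=o(\rho_h^{n-1})$. Rescale
\[
u_h(y):=\rho_h^{-1/2}\bigl(u(x_0+\rho_h y)-a_h(y)\bigr),\qquad y\in B_1,
\]
where the infinitesimal rigid motions $a_h$ are supplied by Proposition~\ref{prop:3CCF16} applied on $B_{\rho_h}(x_0)$. The sequence $(u_h)$ has uniformly bounded Griffith energy on $B_1$ and $\hn(J_{u_h}\cap B_1)\to 0$. By Theorem~\ref{thm:main} one extracts a subsequential limit $u_\infty$ with finite elastic energy and $\hn(J_{u_\infty})=0$; the subtraction of $a_h$ together with the $L^1$-control of Proposition~\ref{prop:3CCF16} rules out the escape set $A$ of \eqref{eq:defA}, and Korn's inequality upgrades convergence to $H^1_{\mathrm{loc}}(B_1)$. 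Standard liminf arguments show that $u_\infty$ is a local minimiser of the pure elastic (linear, or $p$-growth in dimension two) Dirichlet energy, which satisfies the classical decay
\[
\int_{B_{\theta r}}|e(u_\infty)|^2\,\mathrm{d}y \leq C\,\theta^n \int_{B_r}|e(u_\infty)|^2\,\mathrm{d}y.
\]

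The contradiction is closed by a \emph{cleaning} step: using this decay together with the compactness, one builds on a ball $B_{\theta\rho_h}(x_0)$, for $\theta$ small but fixed, a competitor $v$ that equals $u$ outside and solves the pure elastic Dirichlet problem inside, with datum the trace of $u$ on a well-chosen radius. The $\hn$-cost of the new interface on $\partial B_{\theta\rho_h}(x_0)\cap\{u\neq v\}$ is controlled by $\hn(J_u\cap B_{\rho_h}(x_0))=o(\rho_h^{n-1})$, while the bulk gain is of order $\theta^n\rho_h^{n-1}$, which for small $\theta$ and large $h$ dominates, violating the minimality of $u$. The principal technical obstacle is precisely this cleaning step: since truncations and Lipschitz compositions destroy $GSBD$-integrability, the competitor must be constructed through a Fubini/coarea argument selecting a good radius along which the trace of $u$ is controlled by its elastic strain, again via Proposition~\ref{prop:3CCF16}. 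The $p$-growth case \eqref{2501182258} adds an extra layer of difficulty because the Euler-Lagrange system is nonlinear and the decay of $u_\infty$ requires the appropriate $p$-Laplacian-type estimates.
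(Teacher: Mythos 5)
The paper does not prove Theorem~\ref{teo:CCIF}: it is stated as a quotation of the regularity results of \cite{CFI17DCL} (for $n=2$ with the $p$-growth density \eqref{2501182258}) and \cite{ChaConIur17} (for the quadratic density in arbitrary dimension), and the text preceding the statement makes this explicit. There is therefore no ``paper's own proof'' to compare your sketch against; the result is imported, not established here.

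That said, your sketch does reflect the De~Giorgi--Carriero--Leaci blueprint that those references follow, and you correctly identify the central obstruction: in $GSBD$ one cannot truncate or compose with Lipschitz maps, so the competitor for the cleaning step must be built by other means. Two caveats are worth recording. First, the decisive technical input in \cite{ChaConIur17} is a Sobolev approximation theorem for $GSBD$ functions with small jump set (built on \cite{CCF16}), which is what actually produces the comparison competitor; a coarea/good-radius selection alone, as you describe, is not enough to carry out the cleaning, because after you cut along a good sphere you still need to extend the trace into the ball by a function with controlled elastic energy and no jump, and this is precisely what the approximation theorem supplies. Second, invoking Theorem~\ref{thm:main} for the blow-up compactness and then claiming that ``subtraction of $a_h$ rules out the escape set $A$'' and that ``Korn's inequality upgrades convergence to $H^1_{\mathrm{loc}}$'' compresses a genuinely delicate step: Theorem~\ref{thm:main} only gives convergence in measure and weak $L^1$ convergence of $e(u_h)$, the set $A$ may a priori be nonempty even after subtracting rigid motions, and Korn's inequality requires control on a set of full measure, so one must first show $\mathcal L^n(A)=0$ and then work harder (exploiting $\hn(J_{u_h}\cap B_1)\to 0$ together with Proposition~\ref{prop:3CCF16}) to pass to a strong Sobolev limit. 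These gaps are where the real work in \cite{CFI17DCL,ChaConIur17} lies, so your proposal should be read as a correct outline of the strategy rather than as a proof.
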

 
In \cite{CC19}, this is extended 
to $\Omega \cup \dod$, yielding the following result (see~\cite{BabGia14} for the $SBV$ case):
\begin{theorem}
Let $\dod$ be of class $C^1$, $u_0 \in W^{1, \infty}(\Rn; \Rn)$, and $u\in GSBD^2(\Omega\setminus K_0)$, 
be a minimiser of \eqref{1407180937}
Then 
\begin{equation*}
\hn\big(((\Omega \cup \dod) \setminus K_0)\cap (\overline{J}_u \sm J_u)\big)=0\,,\qquad u - u_0 \in C^1\big((\Omega \cup \dod) \sm(K_0 \cup \overline{J}_u)\big)\,.
\end{equation*}
\end{theorem}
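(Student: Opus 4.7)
The plan is to reduce the boundary statement to the already-established interior statement (Theorem~\ref{teo:CCIF}) by a local reflection/extension across $\dod$. Since both conclusions are local and concern points of $\dod \setminus K_0$, it suffices to work in a neighbourhood of an arbitrary point $x_0 \in \dod \setminus K_0$; the interior case $x_0 \in \Omega \setminus K_0$ is already known.

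Step 1: \emph{Local flattening.} Since $\dod$ is of class $C^1$, there exist a radius $r_0>0$ and a $C^1$-diffeomorphism $\Phi\colon B_{r_0}(x_0)\to Q=(-1,1)^n$ with $\Phi(\dod\cap B_{r_0}(x_0))=Q\cap\{x_n=0\}$ and $\Phi(\Omega\cap B_{r_0}(x_0))=Q^+:=Q\cap\{x_n>0\}$. Using that $GSBD$ is stable under bilipschitz changes of variables (and so is the Griffith energy, up to equivalent Hooke's tensor $\widetilde{\mathbb{C}}$ and an equivalent integrand, which are admissible in Theorem~\ref{teo:CCIF}), we may assume without loss of generality $x_0=0$, $\dod\cap B_{r_0}=\{x_n=0\}$, $\Omega\cap B_{r_0}=B_{r_0}^+$.

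Step 2: \emph{Reflection/extension of the competitor.} Let $\widetilde\Omega':=B_{r_0}$ and define, for $x=(x',x_n)\in B_{r_0}$,
\begin{equation*}
\widetilde u(x',x_n) :=
\begin{cases}
u(x',x_n) & \text{if } x_n>0,\\
2u_0(x',0) - R\,u(x',-x_n) & \text{if } x_n<0,
\end{cases}
\end{equation*}
where $R=\mathrm{diag}(1,\dots,1,-1)$ is the normal reflection. The map $\widetilde u$ is built so that, \emph{if} the trace of $u$ from $\Omega$ matches $u_0$ at a point of $\dod$, then the traces of $\widetilde u$ from the two sides coincide there; hence the new jump set created on $\{x_n=0\}$ is exactly $\dod\cap\{\mathrm{tr}_\Omega u\neq u_0\}$, i.e.~the same set that already contributes to the energy \eqref{1407180937}. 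The Lipschitz assumption on $u_0$ ensures that $\widetilde u\in GSBD^2(\widetilde\Omega')$ (resp.\ $GSBD^p$ in $n=2$) and that its symmetrised gradient on the reflected side equals $2e(u_0)(x',0)$ (a bounded contribution) minus a reflection of $e(u)$, giving a quadratic elastic density $\widetilde W$ on the lower half ball of the same coercivity type as $\mathbb{C}$; in particular $e(\widetilde u)\in L^p$ (resp.\ $L^2$).

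Step 3: \emph{$\widetilde u$ is a local minimizer.} Defining the closed set $\widetilde K_0 := (K_0\cap\overline{B_{r_0}^+})\cup R(K_0\cap\overline{B_{r_0}^+})$ and the extended boundary datum $\widetilde u_0$ similarly (which is Lipschitz on $\widetilde\Omega'$), one verifies that $\widetilde u$ minimises, among compactly supported perturbations in $B_{r_0}$, a Griffith-type functional
\begin{equation*}
\int\limits_{B_{r_0}} \widetilde W(x,e(v))\dx + \hn(J_v\setminus \widetilde K_0)
\end{equation*}
where $\widetilde W$ satisfies the hypotheses of Theorem~\ref{teo:CCIF} (symmetric, quadratic in $e$, with the same ellipticity constant, possibly depending measurably on $x$, but the regularity results of \cite{CFI17DCL,ChaConIur17} are stable under this dependence because $\widetilde W$ is piecewise constant coefficient and continuous away from $\{x_n=0\}$). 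Indeed, any admissible perturbation $v$ of $\widetilde u$ with $v=\widetilde u$ outside a compact $K\Subset B_{r_0}$ yields a competitor $\bar v$ on $\widetilde\Omega$ by reflecting back, and matching traces at $\{x_n=0\}$ is no worse than the trace mismatch of $u$; the minimality of $u$ then forces the claimed inequality.

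Step 4: \emph{Apply the interior regularity.} Applying Theorem~\ref{teo:CCIF} to $\widetilde u$ on the \emph{interior} of $B_{r_0}$ yields
\begin{equation*}
\hn\big(B_{r_0}\cap(\overline{J_{\widetilde u}}\setminus J_{\widetilde u})\big)=0, \qquad \widetilde u\in C^1\big(B_{r_0}\setminus(\widetilde K_0\cup\overline{J_{\widetilde u}})\big).
\end{equation*}
Restricting to the upper half $\{x_n>0\}$ and to the interface $\{x_n=0\}\setminus K_0$, and observing that $J_{\widetilde u}\cap B_{r_0}^+=J_u\cap B_{r_0}^+$ and that on $\{x_n=0\}$ the set $\overline{J_{\widetilde u}}\setminus J_{\widetilde u}$ captures exactly the failure of essential closedness of $J_u$ relative to $\dod$, we get the essential closedness statement on $\dod\cap B_{r_0}\setminus K_0$. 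Finally, near such a boundary point, the $C^1$-regularity of $\widetilde u$ translates by construction into $C^1$-regularity of $u-u_0$ up to $\dod\setminus(K_0\cup\overline{J_u})$, since subtracting the Lipschitz datum $u_0$ is exactly what kills the jump and slope mismatch built into the reflection.

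The main obstacle is Step 3: verifying rigorously that the reflection of a minimiser remains a minimiser of an \emph{interior} Griffith functional to which Theorem~\ref{teo:CCIF} applies. The subtle point is that only the Dirichlet-violation set $\dod\cap\{\mathrm{tr}_\Omega u\neq u_0\}$ (counted with $\hn$-mass in \eqref{1407180937}) is what prevents the naive reflection $2u_0-Ru$ from being continuous across $\{x_n=0\}$, so the accounting of jump masses matches exactly — this requires the Lipschitz hypothesis on $u_0$ and the $C^1$ hypothesis on $\dod$ in a quantitative way (to control the distortion of $\mathbb{C}$ and of $\hn$ under $\Phi$), and is precisely what is carried out in~\cite{CC19}.
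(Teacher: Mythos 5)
You cannot compare this sketch with a proof in the paper, because the paper does not prove this theorem: it is quoted from the forthcoming work \cite{CC19}, where the strategy is not a reflection but an extension of the interior regularity scheme of \cite{CFI17DCL,ChaConIur17} (density lower bounds and decay estimates for the jump set, with comparison functions built from the Lipschitz datum $u_0$ near the $C^1$ part $\dod$) carried out up to the boundary. So the closing appeal ``this is precisely what is carried out in \cite{CC19}'' does not back up the reflection route, and the sketch has to stand on its own — which it does not, as you yourself flag at Step 3.

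Two concrete problems. First, the extension formula is off: with $\widetilde u(x',x_n)=2u_0(x',0)-Ru(x',-x_n)$ the trace from below at $(x',0)$ is $2u_0-R\tau$ (where $\tau=\mathrm{tr}_\Omega u$), and $\tau=2u_0-R\tau$ forces $u_0\cdot e_n=0$, so traces do \emph{not} match on $\{\mathrm{tr}_\Omega u=u_0\}$ in general; the matching extension would be $Ru(x',-x_n)+(I-R)u_0(x',0)$. That is fixable, but the central Step 3 is not: the Dirichlet-violation term in \eqref{1407180937} is one-sided and cannot be turned into an interior jump term by reflection. A competitor $\widetilde v$ in $B_{r_0}$ whose traces from the two sides of $\{x_n=0\}$ agree with each other but differ from $u_0$ pays no surface energy on the interface, whereas each of its two ``unreflected'' halves is charged the full violation set in the original functional; running your back-reflection comparison only yields the minimality of $\widetilde u$ up to an error $\hn(\{\mathrm{tr}_+\widetilde v\neq u_0\})+\hn(\{\mathrm{tr}_-\widetilde v\neq u_0\})-\hn(\{\mathrm{tr}_+\widetilde v\neq \mathrm{tr}_-\widetilde v\})\geq 0$, which can be strictly positive. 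Hence the reflected function is in general \emph{not} a local minimiser of an interior Griffith energy, and Theorem~\ref{teo:CCIF} cannot be invoked. Finally, even granting Step 3, the conjugation $e(u)\mapsto R\,e(u)\,R$ and the $C^1$ flattening replace the constant tensor $\mathbb{C}$ by $x$-dependent coefficients that are discontinuous exactly across the interface where regularity is sought; Theorem~\ref{teo:CCIF} is stated for a constant $\mathbb{C}$, and the claimed stability of the regularity theory under such coefficients is asserted without justification. These are genuine gaps, not technicalities: the one-sided nature of the Dirichlet penalty is precisely why boundary regularity requires a separate argument rather than a reflection reduction to the interior case.
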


Another consequence of Theorem~\ref{thm:main} is a compactness result for phase-field approximations of~\eqref{eq:FM}, which are used for the numerical simulations of evolutions in brittle fracture (such as in~\cite{BouFraMar00}), see~\cite{ChaCri17} for details.

\bigskip
\noindent {\bf Acknowledgements.} 
V.~Crismale is supported by a grant of the LabEx LMH (ANR-11-LABX-0056-LMH, ``\emph{Investissement d'avenir}).
This work was also included in V.~Crismale's project \emph{BriCoFra}, Marie Sk\l odowska-Curie Standard European Fellowship No 793018 starting October 2018.
 The authors wish to thank the anonymous referees for their valuable comments. 

%
%
%


\begin{thebibliography}{10}

\bibitem{Amb89UMI}
{\sc L.~Ambrosio}, {\em A compactness theorem for a new class of functions of
  bounded variation}, Boll. Un. Mat. Ital. B (7), 3 (1989), pp.~857--881.

\bibitem{Amb90GSBV}
\leavevmode\vrule height 2pt depth -1.6pt width 23pt, {\em Existence theory for
  a new class of variational problems}, Arch. Rational Mech. Anal., 111 (1990),
  pp.~291--322.

\bibitem{AmbNew95}
{\sc L.~Ambrosio}, {\em A new proof of the {SBV} compactness theorem}, Calc.
  Var. Partial Differential Equations, 3 (1995), pp.~127--137.

\bibitem{AmbCosDM97}
{\sc L.~Ambrosio, A.~Coscia, and G.~Dal~Maso}, {\em Fine properties of
  functions with bounded deformation}, Arch. Rational Mech. Anal., 139 (1997),
  pp.~201--238.

\bibitem{AFP}
{\sc L.~Ambrosio, N.~Fusco, and D.~Pallara}, {\em Functions of bounded
  variation and free discontinuity problems}, Oxford Mathematical Monographs,
  The Clarendon Press, Oxford University Press, New York, 2000.

\bibitem{Bab15}
{\sc J.-F. Babadjian}, {\em Traces of functions of bounded deformation},
  Indiana Univ. Math. J., 64 (2015), pp.~1271--1290.

\bibitem{BabGia14}
{\sc J.-F. Babadjian and A.~Giacomini}, {\em Existence of strong solutions for
  quasi-static evolution in brittle fracture}, Ann. Sc. Norm. Super. Pisa Cl.
  Sci. (5), 13 (2014), pp.~925--974.

\bibitem{BelCosDM98}
{\sc G.~Bellettini, A.~Coscia, and G.~Dal~Maso}, {\em Compactness and lower
  semicontinuity properties in {${\rm SBD}(\Omega)$}}, Math. Z., 228 (1998),
  pp.~337--351.

\bibitem{BouFraMar00}
{\sc B.~Bourdin, G.~A. Francfort, and J.-J. Marigo}, {\em Numerical experiments
  in revisited brittle fracture}, J. Mech. Phys. Solids, 48 (2000),
  pp.~797--826.

\bibitem{Cha03}
{\sc A.~Chambolle}, {\em A density result in two-dimensional linearized
  elasticity, and applications}, Arch. Ration. Mech. Anal., 167 (2003),
  pp.~211--233.

\bibitem{CCF16}
{\sc A.~Chambolle, S.~Conti, and G.~A. Francfort}, {\em Korn-{P}oincar\'e
  inequalities for functions with a small jump set}, Indiana Univ. Math. J., 65
  (2016), pp.~1373--1399.

\bibitem{ChaConFra18ARMA}
\leavevmode\vrule height 2pt depth -1.6pt width 23pt, {\em Approximation of a
  brittle fracture energy with a constraint of non-interpenetration}, Arch.
  Ration. Mech. Anal., Printed online at
  \url{https://doi.org/10.1007/s00205-017-1207-z},  (2018).

\bibitem{ChaConIur17}
{\sc A.~Chambolle, S.~Conti, and F.~Iurlano}, {\em Approximation of functions
  with small jump sets and existence of strong minimizers of {G}riffith’s
  energy}, 2017, In preparation.

\bibitem{ChaCri17}
{\sc A.~Chambolle and V.~Crismale}, {\em {A density result in {$GSBD^p$} with applications to the approximation of brittle fracture energies}}.
\newblock Preprint arXiv:1708.03281, Aug. 2017.

\bibitem{ChaGiaPon07}
{\sc A.~Chambolle, A.~Giacomini, and M.~Ponsiglione}, {\em Piecewise rigidity},
  J. Funct. Anal., 244 (2007), pp.~134--153.

\bibitem{CFI16ARMA}
{\sc S.~Conti, M.~Focardi, and F.~Iurlano}, {\em Integral representation for
  functionals defined on {$SBD^p$} in dimension two}, Arch. Ration. Mech.
  Anal., 223 (2017), pp.~1337--1374.

\bibitem{CFI17DCL}
\leavevmode\vrule height 2pt depth -1.6pt width 23pt, {\em Existence of strong
  minimizers for the {Griffith} static fracture model in dimension two}, 2017,
  Preprint.

\bibitem{DM13}
{\sc G.~Dal~Maso}, {\em Generalised functions of bounded deformation}, J. Eur.
  Math. Soc. (JEMS), 15 (2013), pp.~1943--1997.

\bibitem{DMFraToa07}
{\sc G.~Dal~Maso, G.~A. Francfort, and R.~Toader}, {\em Quasistatic crack
  growth in nonlinear elasticity}, Arch. Ration. Mech. Anal., 176 (2005),
  pp.~165--225.

\bibitem{DMLaz10}
{\sc G.~Dal~Maso and G.~Lazzaroni}, {\em Quasistatic crack growth in finite
  elasticity with non-interpenetration}, Ann. Inst. H. Poincar\'e Anal. Non
  Lin\'eaire, 27 (2010), pp.~257--290.

\bibitem{DMMorSol92}
{\sc G.~Dal~Maso, J.-M. Morel, and S.~Solimini}, {\em A variational method in
  image segmentation: existence and approximation results}, Acta Math., 168
  (1992), pp.~89--151.

\bibitem{DMToa02}
{\sc G.~Dal~Maso and R.~Toader}, {\em A model for the quasi-static growth of
  brittle fractures: existence and approximation results}, Arch. Ration. Mech.
  Anal., 162 (2002), pp.~101--135.

\bibitem{DeGioAmb88GBV}
{\sc E.~De~Giorgi and L.~Ambrosio}, {\em New functionals in the calculus of
  variations}, Atti Accad. Naz. Lincei Rend. Cl. Sci. Fis. Mat. Natur. (8), 82
  (1988), pp.~199--210 (1989).

\bibitem{DeGCarLea}
{\sc E.~De~Giorgi, M.~Carriero, and A.~Leaci}, {\em Existence theorem for a
  minimum problem with free discontinuity set}, Arch. Rational Mech. Anal., 108
  (1989), pp.~195--218.

\bibitem{FraLar03}
{\sc G.~A. Francfort and C.~J. Larsen}, {\em Existence and convergence for
  quasi-static evolution in brittle fracture}, Comm. Pure Appl. Math., 56
  (2003), pp.~1465--1500.

\bibitem{FraMar98}
{\sc G.~A. Francfort and J.-J. Marigo}, {\em Revisiting brittle fracture as an
  energy minimization problem}, J. Mech. Phys. Solids, 46 (1998),
  pp.~1319--1342.

\bibitem{FriPWKorn}
{\sc M.~Friedrich}, {\em A piecewise {K}orn inequality in {$SBD$} and
  applications to embedding and density results}, 2016, Preprint.

\bibitem{Fri17ARMA}
\leavevmode\vrule height 2pt depth -1.6pt width 23pt, {\em A derivation of
  linearized {G}riffith energies from nonlinear models}, Arch. Ration. Mech.
  Anal., 225 (2017), pp.~425--467.

\bibitem{Fri17M3AS}
\leavevmode\vrule height 2pt depth -1.6pt width 23pt, {\em A {K}orn-type
  inequality in {SBD} for functions with small jump sets}, Math. Models Methods
  Appl. Sci., 27 (2017), pp.~2461--2484.

\bibitem{FriSol16}
{\sc M.~Friedrich and F.~Solombrino}, {\em Quasistatic crack growth in
  2d-linearized elasticity}, Ann. Inst. H. Poincaré Anal. Non Linéaire, 35
  (2018), pp.~27--64.

\bibitem{Griffith}
{\sc A.~A. Griffith}, {\em The phenomena of rupture and flow in solids},
  Philos.\ Trans.\ Roy.\ Soc.\ London Ser.\ A, 221 (1920), pp.~163--198.

\bibitem{Iur14}
{\sc F.~Iurlano}, {\em A density result for {GSBD} and its application to the
  approximation of brittle fracture energies}, Calc. Var. Partial Differential
  Equations, 51 (2014), pp.~315--342.

\bibitem{MadSol01}
{\sc F.~Maddalena and S.~Solimini}, {\em Lower semicontinuity properties of
  functionals with free discontinuities}, Arch. Ration. Mech. Anal., 159
  (2001), pp.~273--294.

\bibitem{MumSha}
{\sc D.~Mumford and J.~Shah}, {\em Boundary detection by minimizing
  functionals}.
\newblock Proc. IEEE Conf. on Computer Vision and Pattern Recognition, San
  Francisco, 1985.

\bibitem{Tem}
{\sc R.~Temam}, {\em Mathematical problems in plasticity}, Gauthier-Villars,
  Paris, 1985. Translation of Problèmes mathématiques en plasticité.
  Gauthier-Villars, Paris, 1983.

\end{thebibliography}

\end{document}